\numberwithin{equation}{section}
\newtheorem{theorem}{Theorem}[section]
\newtheorem{corollary}[theorem]{Corollary}
\newtheorem{lemma}[theorem]{Lemma}
\newtheorem{proposition}[theorem]{Proposition}
\newtheorem{example}[theorem]{Example}
\theoremstyle{definition}
\newtheorem{definition}[theorem]{Definition}
\newcommand{\mrm}{\mathrm}
\newcommand{\mbf}{\mathbf}
\newcommand{\mbb}{\mathbb}
\newcommand{\mcal}{\mathcal}
\title[ Affine flag varifeties  of type $D$]
{Affine Flag varieties  of type $D$}
\author[Quanyong Chen]{Quanyong Chen}
\address{Harbin Engineering University,
Harbin, China}
\email{chenquanyong@hrbeu.edu.cn}
\author[Zhaobing Fan]{Zhaobing Fan}
\address{Harbin Engineering University,
Harbin, China}
\email{fanzhaobing@hrbeu.edu.cn}
\author[Qi Wang]{Qi Wang*}
\address{Harbin Engineering University,
	Harbin, China}
\email{wang777@ustc.edu.cn}
\thanks{ }
\begin{document}
\begin{abstract}
  The Hecke algebras and quantum group of affine type $A$ admit geometric realizations in terms of complete flags and partial flags over a local field, respectively.
  Subsequently, it is demonstrated that the quantum group associated to partial flag varieties of affine type $C$ is a coideal subalgebra of
  quantum group of affine type $A$.
  In this paper, we establish a lattice presentation of the complete (partial) flag varieties of affine type $D$.
  Additionally, we determine the structures of convolution algebra associated to complete flag varieties of affine type $D$, which is isomorphic to the (extended) affine Hecke algebra.
  We also show that there exists a monomial basis and a canonical basis of the convolution algebra, and establish the positivity properties of the canonical basis with respect to multiplication.
\end{abstract}

\maketitle

\setcounter{tocdepth}{2}
\tableofcontents
\section{Introduction}

The geometric realization of Hecke algebra has played important roles in geometric representation theory.
Iwahori \cite{Iw64} provided a geometric realization of Hecke algebras as convolution algebras $\mathbf{H}_{A}$ on pairs of complete flags over a finite field.
Soon after, Iwahori and Matsumoto \cite{IM65} realized the affine Hecke algebras $\mathbf{H}_{\widetilde{A}}$ by  utilizing pairs of complete flags of affine type over a local field.
These works are the foundation of geometric representation theory.

The geometric realization of quantum groups and Hecke algebras has always been a topic of great interest and significance.
Beilinson, Lusztig and McPherson \cite{BLM90} made a significant contribution by constructing a geometric realization of quantum Schur algebra $\mathbf{S}_{n,d}^{A}$ as convolution algebras on pairs of partial flags over a finite field.
  They also realized the (modified) quantum group $\mathbf{U}(\mathfrak{gl}_n)$ in the process of the stabilization and completion of quantum Schur algebras, and showed the
 modified quantum group $\dot{\mathbf{U}}(\mathfrak{gl}_n)$ admits a canonical basis.
In a subsequent work by Grojnowski and Lusztig \cite{GL92}, the Schur-Jimbo duality is realized
geometrically by considering the product variety of the complete flag varieties and the $n$-step partial flag varieties of type $A$.
The affine quantum Schur algebra $\mathbf{S}_{n,d}$ is by definition the convolution algebra of pairs of flags of affine type $A$ in \cite{Lu99,Lu00}.
Also, there is an affine version of Schur-Jimbo duality formed in \cite{CP96}.
Motivated by \cite{BW13}, Bao, Kujawa, Li and Wang \cite{BKLW14, BLW14} provided a geometric construction of Schur-type algebras $\mathbf{S}_{n,d}^{\imath}$ and Hecke algebras $\mathbf{H}_{C}$ in terms of $n$-step partial flags and complete flags of type $C_d$, respectively.
Fan and Li \cite{FL14} established a new duality between the Quantum algebra $\mathcal{S}^m$ and the Iwahori-Hecke algebra $\mathbf{H}_D$ of type $D$ attached to ${\rm SO}_F(2d)$ algebraically and geometrically by considering the (partial) flag varieties of type $D$.

Recall that there is a lattice representation of the complete and $n$-step flag varieties of affine type $C$ over a local field in \cite{Sa99}.
The affine Schur algebra $\mathbf{S}_{n,d}^{\mathfrak{c}}$ (resp. affine Hecke algebra $\mathbf{H}_{\widetilde{C}}$) are by the definition the convolution algebra of pairs of partial (resp. complete) flags of affine type $C$ \cite{FLLLW20}.
The crucial point to study the structure of the $\mathbf{S}_{n,d}^{\mathfrak{c}}$, as long as the corresponding i-quantum group, for example, generators, relations and the canonical basis etc., is the lattice presentation of the partial flag varieties.
The quantum algebra $\mathbf{U}_n^{\mathfrak{c}}$ is by definition a suitable subalgebras of the projective limit of the projective system of Lusztig algebras, and the comultiplication homomorphisms gives rise to show that $\mathbf{U}_n^{\mathfrak{c}}$ is a coideal subalgebra of $\mathbf{U}(\widehat{\mathfrak{sl}}_n)$.

To this end, it is compelling to ask what happens to the classical case of the  affine type $D$. The purpose of this paper is to provide an answer to this question, as a sequel to \cite{FL14, FLLLW20}.
In section 2, we recall some results of flag varieties of type $D$ over a finite field.
In section 3, our first main result is the construction of lattice presentation which can be adapted to affine type $D$, on which the special orthogonal group ${\rm SO}_F(V)$ (where $F=\mbb F((\varepsilon))$) acts for the complete flag varieties $\mcal Y_d^{\mathfrak{d}}$ and for the $n$-step partial flag varieties $\mathcal{X}_{n,d}^{\mathfrak{d}}$, which is formulated in this paper, for $n$ even. This lattice presentation can be used to study the affine Schur algebra and the corresponding i-quantum group, which has been study in forthcoming paper \cite{CF}.
In section 4, we parameterize the orbits for the product $\mathcal{Y}_{d}^{\mathfrak{d}}\times \mathcal{Y}_{d}^{\mathfrak{d}}$ under the diagonal action of the group ${\rm SO}_F(V)$ by the set of matrices.
We show that the affine Hecke algebra $\mathcal{H}_{d}^{\mathfrak{d}}$ is by the definition the convolution algebra of pairs of the complete flags in $\mathcal{Y}_{d}^{\mathfrak{d}}$, and admits a monomial basis and a canonical basis, which enjoys a positivity with respect to multiplication.

In this paper, we denote by $\mbb N$ and $[a,b]$ the set of nonnegative integers and the set of integers between $a$ and $b$, respectively.

\noindent {\bf Acknowledgement.}
We express our gratitude Haitao Ma for valuable discussions and comments.
This paper was partially supported by the NSF of China grant
12271120 and 12101152, the NSF of Heilongjiang Province grant JQ2020A001, and the Fundamental Research Funds for the central universities.

\section{Flag varieties of type $D$}
In this short section, we introduce general conventions, fix some notation, and offer a brief review of some definitions and facts for flag varieties of type $D$ over a finite field. For more details, we refer the reader to \cite{W97}.

Let $\mbb F$ be a finite field of $q$ elements with odd characteristic.
Fix a positive integer $d$ and set
$D=2d$.
Moreover, we fix a symmetric bilinear form $\overline{Q}$ on $\mathbb{F}^D$ whose associated matrix under the standard basis is
\begin{equation}\label{associated matrix}
J=\begin{bmatrix}
0 & 0 & \cdots & 0 & 1 \\
0 & 0 & \cdots & 1 & 0 \\
\vdots & \vdots & \iddots & \vdots & \vdots \\
0 & 1 & \cdots & 0 & 0 \\
1 & 0 & \cdots & 0 & 0 \\
\end{bmatrix}.
\end{equation}
For a vector subspace $W$ of $\mathbb{F}^D$, we write $|W|$ and $W^\bot$ for its dimension and orthogonal complement, respectively. A vector subspace $W$ is called isotropic if $W \subset W^\bot$.
 For any isotropic subspace $W$,
 the bilinear form $\overline{Q}$ induces a non-degenerate symmetric bilinear form $\overline{Q}|_{W^\bot/W}$ on $W^{\bot}/ W$.
 Moreover,
the associated matrix of $\overline{Q}|_{W^\bot/W}$ is of the form \eqref{associated matrix} with rank $D-2|W|$ under a certain basis.

 Denote by ${\rm O}_{\mbb{F}}(D)$ and ${\rm SO}_{\mbb{F}}(D)$ the orthogonal group and the special orthogonal group with respect to $\overline{Q}$, respectively. We have the following propositions.
\begin{proposition}
Let $W$ and $W'$ be isotropic subspaces with dimension $d-1$ and $f: W\rightarrow W'$ an invertible transformation.
  Then there exists $g \in {\rm SO}_{\mbb{F}}(D)$
such that $g|_{W}=f$.
\end{proposition}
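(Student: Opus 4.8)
The plan is to extend $f$ to an orthogonal transformation of $\mbb F^{D}$ by the classical Witt extension theorem, and then, if the determinant comes out wrong, to correct it using the $2$-dimensional non-degenerate space $W^{\bot}/W$.

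First I would note that, since $W$ and $W'$ are isotropic, the restrictions of $\overline{Q}$ to $W$ and to $W'$ both vanish, so the invertible linear map $f\colon W\to W'$ is trivially an isometry between these subspaces. As $\overline{Q}$ is non-degenerate on $\mbb F^{D}$ and $\mbb F$ has odd characteristic, the Witt extension theorem then yields some $g_{0}\in {\rm O}_{\mbb{F}}(D)$ with $g_{0}|_{W}=f$. If $\det g_{0}=1$ we are done.

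If instead $\det g_{0}=-1$, I would correct the determinant by constructing $h\in {\rm O}_{\mbb{F}}(D)$ with $h|_{W}={\rm id}_{W}$ and $\det h=-1$; then $g:=g_{0}h$ satisfies $g|_{W}=g_{0}|_{W}=f$ and $\det g=1$, hence $g\in{\rm SO}_{\mbb{F}}(D)$. To build such an $h$, note $|W^{\bot}|=D-(d-1)=d+1$, so $W^{\bot}/W$ is non-degenerate of dimension $2$, with Gram matrix of the form \eqref{associated matrix} by the fact recalled above. I would pick an isotropic subspace $\widetilde W$ of dimension $d-1$ complementary to $W$ inside a non-degenerate subspace $W\oplus\widetilde W$ (pairing a basis of $W$ with a dual basis), so that $\mbb F^{D}=(W\oplus\widetilde W)\perp U$ with $U=(W\oplus\widetilde W)^{\bot}$ non-degenerate of dimension $2$ and canonically isometric to $W^{\bot}/W$. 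Choosing an anisotropic vector $u\in U$ (which exists, $U$ being non-degenerate of positive dimension over a field of odd characteristic) and letting $\tau_{u}$ be the associated reflection of $U$, the transformation that acts as $\tau_{u}$ on $U$ and as the identity on $W\oplus\widetilde W$ lies in ${\rm O}_{\mbb{F}}(D)$, fixes $W$ pointwise, and has determinant $-1$; this is the desired $h$.

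The step I expect to be the main obstacle is the determinant correction: one cannot fix the parity by an arbitrary post-composition, since the correcting transformation must fix $W$ pointwise. What makes it work is precisely the hypothesis $\dim W=d-1<d$, which leaves a $2$-dimensional non-degenerate quotient $W^{\bot}/W$ with room for a determinant $-1$ reflection that leaves $W$ untouched; this uses the structural description of $\overline{Q}|_{W^{\bot}/W}$ stated just before the proposition. For a maximal isotropic $W$ of dimension $d$ this room disappears, which is exactly the subtlety characteristic of type $D$.
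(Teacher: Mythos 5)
Your argument is correct. The paper itself gives no proof of this proposition (Section 2 recalls it as a known fact, deferring to \cite{W97}), and your route --- note that any linear isomorphism between totally isotropic subspaces is an isometry, extend it to $g_0\in {\rm O}_{\mbb F}(D)$ by Witt's extension theorem, then repair a determinant $-1$ by post-composing with an orthogonal map that fixes $W$ pointwise --- is exactly the standard argument behind the cited fact, and your key observation is the right one: the correction is possible precisely because $\dim W=d-1<d$ leaves a non-degenerate $2$-dimensional quotient $W^{\bot}/W$, which is what fails for maximal isotropic subspaces (Proposition \ref{different}). One small simplification: you do not need the auxiliary hyperbolic complement $\widetilde W$ and the splitting $\mbb F^{D}=(W\oplus\widetilde W)\perp U$; since the radical of $\overline{Q}|_{W^{\bot}}$ is $W$, the space $W^{\bot}$ contains an anisotropic vector $u$, and the reflection of $\mbb F^{D}$ along $u$ already fixes $u^{\bot}\supset W$ pointwise and has determinant $-1$, so it serves directly as your $h$.
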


\begin{proposition}\label{different}
Let $W$ be an isotropic subspace with dimension $d-1$. Then there exist exactly two maximal isotropic subspaces $V_1$ and $V_2$ containing $W$. Moreover, these two maximal isotropic subspaces are in different ${\rm SO}_{\mbb F}(D)$-orbits.
\end{proposition}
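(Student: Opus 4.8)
The plan is to reduce the statement to the $2$-dimensional non-degenerate quadratic space $W^{\bot}/W$, and then to separate the two ${\rm SO}_{\mbb{F}}(D)$-orbits by a determinant argument.

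First I would set up the standard bijection between the isotropic subspaces of $\mbb{F}^D$ containing $W$ and the isotropic subspaces of $W^{\bot}/W$: if $W\subseteq V$ with $V$ isotropic, then $V\subseteq V^{\bot}\subseteq W^{\bot}$, so $V\mapsto V/W$ is an inclusion-preserving bijection, and it carries maximal isotropic subspaces to maximal isotropic subspaces. Since $\overline Q$ is non-degenerate, $|W^{\bot}|=D-(d-1)=d+1$, hence $|W^{\bot}/W|=2$, and by the discussion preceding the statement the induced form has matrix \eqref{associated matrix} of rank $2$; that is, $W^{\bot}/W$ is a hyperbolic plane. In a hyperbolic plane over $\mbb{F}$ (which has odd characteristic), with Gram matrix $\left[\begin{smallmatrix}0&1\\1&0\end{smallmatrix}\right]$ in a basis $x,y$, a vector $ax+by$ is isotropic if and only if $ab=0$, so there are exactly two isotropic lines, $\mbb{F}x$ and $\mbb{F}y$. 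These pull back to two subspaces $V_1,V_2$ of dimension $d$, hence maximal isotropic (the form is split of Witt index $d$); and since $W^{\bot}/W$ has no $2$-dimensional isotropic subspace, $V_1$ and $V_2$ are the only maximal isotropic subspaces containing $W$. This gives the first assertion (one also reads off $V_1\cap V_2=W$).

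For the second assertion I would first, using the preceding Proposition, apply a suitable element of ${\rm SO}_{\mbb{F}}(D)$ so as to assume $W=\langle e_1,\dots,e_{d-1}\rangle$, the span of the first $d-1$ standard basis vectors; this is harmless since such a substitution does not change whether $V_1$ and $V_2$ lie in the same ${\rm SO}_{\mbb{F}}(D)$-orbit. A short computation then identifies the two maximal isotropic subspaces containing $W$ as $V_1=\langle e_1,\dots,e_d\rangle$ and $V_2=\langle e_1,\dots,e_{d-1},e_{d+1}\rangle$. Next I would introduce $g_0$, the linear involution interchanging $e_d$ and $e_{d+1}$ and fixing every other $e_i$; a direct check on the Gram matrix $J$ shows $g_0\in{\rm O}_{\mbb{F}}(D)$, and clearly $\det g_0=-1$ while $g_0(V_1)=V_2$. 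It then remains to show that no element of ${\rm SO}_{\mbb{F}}(D)$ carries $V_1$ to $V_2$. For this the crucial input is the fact that every $h\in{\rm O}_{\mbb{F}}(D)$ with $h(V_1)=V_1$ has $\det h=1$: choosing a complementary maximal isotropic subspace $U$ with $\mbb{F}^D=V_1\oplus U$, the form restricts to a perfect pairing between $V_1$ and $U$, and in the block form $h=\left[\begin{smallmatrix}A&B\\0&C\end{smallmatrix}\right]$ relative to $V_1\oplus U$ the identity $\langle hv,hu\rangle=\langle v,u\rangle$ forces $A$ and $C$ to be mutually adjoint-inverse for this pairing, whence $\det h=\det A\cdot\det C=1$. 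Granting this, if some $g\in{\rm SO}_{\mbb{F}}(D)$ satisfied $g(V_1)=V_2$, then $g_0^{-1}g$ would stabilize $V_1$, so $1=\det(g_0^{-1}g)=-\det g$, contradicting $\det g=1$. Hence $V_1$ and $V_2$ lie in distinct ${\rm SO}_{\mbb{F}}(D)$-orbits.

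The only step that is more than bookkeeping is the determinant lemma for the stabilizer of a maximal isotropic subspace --- equivalently, the statement that ${\rm SO}$ does not interchange the two families of maximal isotropic subspaces of a split space of type $D_d$ --- and I expect verifying it (via the block computation above, or alternatively via the parity invariant $d-\dim(V\cap V')\bmod 2$ attached to a pair of maximal isotropics) to be the main technical point. Everything else, namely the hyperbolic structure of $W^{\bot}/W$, the count of its isotropic lines, and the explicit reflection $g_0$, is routine.
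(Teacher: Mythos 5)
Your argument is correct and complete. Note, however, that the paper itself offers no proof of this proposition: it is stated as a known fact about finite orthogonal geometry, with the reader referred to [W97] at the start of Section 2, so there is no in-paper argument to compare against. Your route is the standard one and fills that gap cleanly: the bijection between isotropic subspaces containing $W$ and isotropic subspaces of the rank-$2$ non-degenerate quotient $W^{\bot}/W$ (a hyperbolic plane, which in odd characteristic has exactly two isotropic lines) gives the existence and uniqueness of $V_1,V_2$ with $V_1\cap V_2=W$; and the key lemma that any $h\in{\rm O}_{\mbb F}(D)$ stabilizing a maximal isotropic subspace has $\det h=1$ (via the block-triangular form relative to a decomposition $V_1\oplus U$ into complementary maximal isotropics and the perfect pairing forcing $\det A\cdot\det C=1$), combined with the explicit swap $g_0$ of determinant $-1$ taking $V_1$ to $V_2$, correctly separates the two ${\rm SO}_{\mbb F}(D)$-orbits. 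The preliminary reduction to $W=\langle e_1,\dots,e_{d-1}\rangle$ via the preceding proposition is legitimate and harmless. This determinant argument is also exactly what underlies the parity criterion of Proposition~\ref{if and only if}, so your proof is consistent with the way the paper later uses these facts.
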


\begin{proposition}\label{if and only if}
 Let $W$ and $ W'$ be two maximal isotropic subspaces. Then
 $$|W/W\cap W'| \equiv 0 \ {\rm mod} \ 2$$
 if and only if there exists $g \in {\rm SO}_{\mbb F}(D)$ such that $g W=W'$.
\end{proposition}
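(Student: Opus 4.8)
The plan is to reduce the statement to a determinant computation inside ${\rm O}_{\mbb F}(D)$. It suffices to establish two facts: (i) there is an element $g_0 \in {\rm O}_{\mbb F}(D)$ with $g_0 W = W'$ and $\det g_0 = (-1)^{|W/W\cap W'|}$; and (ii) the stabilizer of a maximal isotropic subspace $W$ inside ${\rm O}_{\mbb F}(D)$ is contained in ${\rm SO}_{\mbb F}(D)$. Granting these, every $g \in {\rm O}_{\mbb F}(D)$ with $gW = W'$ has the form $g_0 h$ with $h$ in that stabilizer, so $\det g = \det g_0 = (-1)^{|W/W\cap W'|}$; hence a $g \in {\rm SO}_{\mbb F}(D)$ with $gW = W'$ exists if and only if $(-1)^{|W/W\cap W'|} = 1$, i.e. if and only if $|W/W\cap W'|$ is even, which is the assertion. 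I would also record the symmetry $|W/W\cap W'| = |W'/W\cap W'|$, immediate from $|W| = |W'| = d$.

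For (ii), fix a maximal isotropic $W''$ complementary to $W$; this exists because $\overline Q$ is non-degenerate, so that $\overline Q$ restricts to a perfect pairing $W \times W'' \to \mbb F$ and $\mbb F^D = W \oplus W''$. Any $g$ stabilizing $W$ is block upper-triangular with respect to this decomposition, say $g = \left(\begin{smallmatrix} A & B \\ 0 & C \end{smallmatrix}\right)$, and writing out the identity $\overline Q(gx, gy) = \overline Q(x,y)$ on the $W$-$W''$ block forces $A^{\mathrm t} C = I$, hence $C = (A^{\mathrm t})^{-1}$ and $\det g = \det A \cdot \det C = 1$. This is short once the decomposition is in place.

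For (i), I would construct a basis of $\mbb F^D$ adapted to the pair. Put $U = W \cap W'$, $k = |U|$, and $r = d - k = |W/W\cap W'|$. The key observation is $(W+W')^\perp = W^\perp \cap W'^\perp = W \cap W' = U$, so that $U$ is precisely the radical of $\overline Q$ on $W+W'$ and the induced non-degenerate form on $(W+W')/U$ has $W/U$ and $W'/U$ as complementary Lagrangians. Pick a basis $u_1, \dots, u_k$ of $U$ and extend it to bases $u_1, \dots, u_k, e_1, \dots, e_r$ of $W$ and $u_1, \dots, u_k, e'_1, \dots, e'_r$ of $W'$ whose images in $(W+W')/U$ are dual for the induced form, so that $\overline Q(e_i, e'_j) = \delta_{ij}$ while all remaining pairings among the $e$'s and between the $e$'s and the $u$'s vanish; then adjoin vectors $u^*_1, \dots, u^*_k$ dual to $u_1, \dots, u_k$, orthogonal to every $e_i$ and $e'_i$, and with $\overline Q(u^*_l, u^*_m) = 0$, which exist because $U$ is maximal isotropic in the non-degenerate orthogonal complement of $\mathrm{span}(e_1, \dots, e_r, e'_1, \dots, e'_r)$. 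With respect to this basis the linear involution $g_0$ that fixes every $u_i$ and $u^*_i$ and interchanges $e_i \leftrightarrow e'_i$ preserves $\overline Q$, satisfies $g_0 W = W'$, and has matrix equal to a product of $r$ disjoint transpositions, so $\det g_0 = (-1)^r$. Together with (ii) this proves the proposition.

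The main obstacle is the bookkeeping in (i): one must check carefully that $\overline Q$ can be brought into the claimed normal form — this is exactly where $U = (W+W')^\perp$, and the non-degeneracy of the induced form on $(W+W')/U$ and on $\mathrm{span}(e_1, \dots, e_r, e'_1, \dots, e'_r)$, enter — and that $g_0$ really is orthogonal and sends $W$ onto $W'$. A variant that may fit the later combinatorics better is to build a chain $W = W_0, W_1, \dots, W_r = W'$ of maximal isotropics with $|W_{j-1}/(W_{j-1} \cap W_j)| = 1$ for each $j$ (switch the index from $e_j$ to $e'_j$ one step at a time in the same adapted basis), realize each elementary step by an element of ${\rm O}_{\mbb F}(D)$ of determinant $-1$ (which exists by Witt's extension theorem together with Proposition~\ref{different}), and compose: the resulting element carries $W$ to $W'$ and has determinant $(-1)^r$, so it lies in ${\rm SO}_{\mbb F}(D)$ precisely when $r$ is even, which with (ii) again pins down the orbit.
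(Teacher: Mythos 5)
Your argument is correct. Note first that the paper itself does not prove this proposition: it is stated as a known fact about finite orthogonal groups with a pointer to \cite{W97}, so there is no in-paper proof to compare against; judged on its own, your proof is complete in outline and the route is the standard one. The two pillars are sound: (ii) the stabilizer in ${\rm O}_{\mbb F}(D)$ of a maximal isotropic subspace lies in ${\rm SO}_{\mbb F}(D)$ (your block computation $A^{\mathrm t}C=I$ is right; in fact you do not even need $W''$ isotropic --- any complement gives a perfect pairing with $W$ because $W=W^\perp$, and the $(1,2)$-block condition already forces $\det A\,\det C=1$); and (i) the explicit involution $g_0$ swapping the adapted bases of $W/U$ and $W'/U$ is orthogonal, carries $W$ to $W'$, and has determinant $(-1)^r$ with $r=|W/W\cap W'|$. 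Together these show the sharper statement that \emph{every} $g\in{\rm O}_{\mbb F}(D)$ with $gW=W'$ has determinant $(-1)^{r}$, which yields the proposition in both directions at once (and also recovers the second assertion of Proposition \ref{different}). The bookkeeping you flag in (i) is genuinely the only place needing care, and your justifications are the right ones: $U=(W+W')^\perp$ is the radical of the restricted form, $W/U$ and $W'/U$ are complementary Lagrangians of the nondegenerate quotient so dual lifted bases $e_i,e'_j$ with $Q(e_i,e'_j)=\delta_{ij}$ exist, and the dual isotropic vectors $u^*_l$ exist inside the nondegenerate complement of ${\rm span}(e_i,e'_j)$ by the usual Lagrangian-complement correction $c_l\mapsto c_l-\tfrac12\sum_m \overline Q(c_l,c_m)u_m$, which is available here since the characteristic is odd. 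Spelling out that one correction would make the writeup airtight; otherwise nothing is missing.
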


Fix a maximal isotropic subspace $M$.
Let $\mathscr{Y}$ be the set of filtrations as following:
\begin{align*}
\mathscr{Y}=\left\{F=(F_i)_{0\leq i\leq D} \ \big| \ |F_i|=i,   \ F_i=F_{D-i}^\bot \ {\rm and} \ |F_d\cap M|\equiv 0 \ {\rm mod} \ 2 \right\}.
\end{align*}
 By Proposition \ref{if and only if}, ${\rm SO}_{\mbb F}(D)$ acts on $\mathscr{Y}$ component wisely, i.e., $(g F)_{i}=g \cdot F_i$.
Moreover, ${\rm SO}_{\mbb F}(D)$ acts transitively on $\mathscr{Y}$
thanks to the condition $|F_d\cap M|\equiv 0 \ {\rm mod} \ 2$.

We consider the stable subgroup $\mathrm{B}$ of $F$ for some $F=(F_i)_{0\leq i\leq D} \in \mathscr{Y}$.
Suppose
$\{v_1,\cdots,v_D\}$ is a basis of ${\mbb F}^D$ such that $\overline{Q}(v_i,v_j)=\delta_{i,D+1-j}$ and $\{v_1,\cdots ,v_i\}$
a basis of $F_i$, for $1 \leq i \leq D$.
Denote by $\mathrm {B}_i$ the subgroup of ${\rm SO}_{\mbb F}(D)$ such that
 $$\mrm{B}_i=\{g \in {\rm SO}_{\mbb F}(D) \mid g F_i=F_i\}.$$
 It is easy to see that $\mathrm {B}=\cap_{1\leq i\leq D}\mrm{B}_i$ is a Borel subgroup and $\mathscr{Y}\simeq {\rm SO}_{\mbb F}(D)/ \mathrm {B}$.

\section{Lattice presentation of affine flag varieties of type $D$}
In this section, we will establish a lattice presentation of the complete (partial) flag varieties of affine type $D$.

Let $F=\mbb F((\varepsilon))$ be the field of formal Laurent series over $\mbb F$ and
$\mathfrak{o}=\mbb F[[\varepsilon]]$ the ring of formal power series. Denote by $\mathfrak{m}$ the maximal ideal of $\mathfrak{o}$ generated by
$\varepsilon$.

Let $V=F^D$ be a vector space with a symmetric bilinear form $Q: V\times V \rightarrow F$ whose associated matrix under the standard basis is of the form \eqref{associated matrix}.
A free $\mathfrak{o}$-submodule $\mathcal{L}$ of $V$ with rank $D$ is called an $\mathfrak{o}$-lattice. Clearly, an $\mathfrak{o}$-basis of $\mcal L$ is also an $F$-basis of $V$.
For any lattice $\mcal L$ of $V$, we set
\begin{equation*}
  \mathcal{L}^\sharp =\{v\in V \mid Q(v,\mathcal{L}) \subset\mathfrak{o}\}, \ \
  \mathcal{L}^\ast =\{v\in V \mid Q(v,\mathcal{L}) \subset\mathfrak{m}\}.
\end{equation*}
The $\mathfrak{o}$-modules $\mathcal{L}^\sharp, \mathcal{L}^\ast$ are also lattices of $V$.
It is straightforward to show that $(\mathcal{L}^\sharp)^\sharp=\mathcal{L}$ and $\mathcal{L}^\ast=\varepsilon\mathcal{L}^\sharp$.
 For any two lattices $\mathcal{L}$ and $\mathcal{M}$, the following equations hold:
\begin{equation}\label{eqsharp}
\begin{split}
  (\mathcal{L}+\mathcal{M})^\sharp &=\mathcal{L}^\sharp \cap \mathcal{M}^\sharp, \ \   (\mathcal{L}\cap
\mathcal{M})^\sharp=\mathcal{L}^\sharp+\mathcal{M}^\sharp; \\
(\mathcal{L}+\mathcal{M})^\ast &=\mathcal{L}^\ast \cap \mathcal{M}^\ast, \ \   (\mathcal{L}\cap
\mathcal{M})^\ast=\mathcal{L}^\ast+\mathcal{M}^\ast.
\end{split}
\end{equation}
Moreover, for any $g \in {\rm O}_{F}(V)$, we have
\begin{equation}\label{orthogonal equivalent}
g \mathcal{L}^\sharp= (g \mathcal{L})^\sharp , \ \   g\mathcal{L}^\ast =(g \mathcal{L})^\ast.
\end{equation}
\subsection{Primitive lattice}

A lattice $\mathcal{L}$ is called $\mathfrak{o}$-valued with respect to $Q$ if $Q(\mathcal{L},\mathcal{L}) \subset \mathfrak{o}$.
 Suppose $\mcal L$ is an $\mathfrak{o}$-valued, we can define an induced symmetric $\mbb F$-bilinear form
$$\overline{Q}: \mathcal{L}/\varepsilon\mathcal{L}\times \mathcal{L}/\varepsilon\mathcal{L}
\longrightarrow \mathfrak{o}/\mathfrak{m}\simeq \mbb F$$
 by
$\overline{Q}(x+\varepsilon\mathcal{L},y+\varepsilon\mathcal{L})=Q(x,y)|_{\varepsilon=0}$.

\begin{definition}
 A lattice $\mathcal{L}$ is called primitive if $\mathcal{L}$ is $\mathfrak{o}$-valued
 and  the induced symmetric bilinear form $\overline{Q}$ is non-degenerate on $\mathcal{L}/\varepsilon\mathcal{L}$.
\end{definition}

\begin{proposition}\label{dual property}
A lattice $\mcal L$ is primitive if and only if $\mathcal L=\mathcal L^{\sharp}$.
\end{proposition}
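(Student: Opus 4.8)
The plan is to unwind both conditions "primitive" and "$\mathcal{L}=\mathcal{L}^\sharp$" in terms of a chosen $\mathfrak{o}$-basis of $\mathcal{L}$ and the Gram matrix of $Q$ with respect to that basis. Fix an $\mathfrak{o}$-basis $e_1,\dots,e_D$ of $\mathcal{L}$ and let $G=(Q(e_i,e_j))_{i,j}$ be the associated Gram matrix, which has entries in $F$. First I would observe the easy direction: if $\mathcal{L}$ is $\mathfrak{o}$-valued then $G$ has entries in $\mathfrak{o}$, and $\mathcal{L}^\sharp \supset \mathcal{L}$ always holds in that case; conversely $v\in\mathcal{L}^\sharp$ means $Q(v,e_j)\in\mathfrak{o}$ for all $j$, which translates to a linear-algebra condition on the coordinate vector of $v$. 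The upshot is that $\mathcal{L}^\sharp=\mathcal{L}$ is equivalent to: $G\in M_D(\mathfrak{o})$ and $\bar G:=G\bmod\mathfrak{m}$ is invertible over $\mbb F$ (equivalently $\det G\in\mathfrak{o}^\times$). This is because, when $G\in M_D(\mathfrak{o})$, a vector $v=\sum x_i e_i\in V$ with $x=(x_i)\in F^D$ lies in $\mathcal{L}^\sharp$ iff $G x\in\mathfrak{o}^D$; the largest such lattice of $x$'s is $\mathfrak{o}^D$ precisely when $G$ is invertible over $\mathfrak{o}$, i.e. $\det\bar G\neq 0$, and otherwise it is strictly larger than $\mathfrak{o}^D$.

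Next I would match this against the definition of primitive. That $\mathcal{L}$ is $\mathfrak{o}$-valued is by definition the statement $G\in M_D(\mathfrak{o})$. Given that, the induced form $\overline{Q}$ on $\mathcal{L}/\varepsilon\mathcal{L}$ has Gram matrix exactly $\bar G=G\bmod\mathfrak{m}$ with respect to the images $\bar e_1,\dots,\bar e_D$, and $\overline{Q}$ is non-degenerate iff $\bar G$ is invertible over $\mbb F$. Combining the two paragraphs, both "$\mathcal{L}$ primitive" and "$\mathcal{L}=\mathcal{L}^\sharp$" are seen to be equivalent to the single condition "$G\in M_D(\mathfrak{o})$ and $\det G\in\mathfrak{o}^\times$", which finishes the proof.

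The only real point requiring care — the main obstacle, such as it is — is the clean justification that, for $G\in M_D(\mathfrak{o})$, the set $\{x\in F^D\mid Gx\in\mathfrak{o}^D\}$ equals $\mathfrak{o}^D$ if and only if $\det\bar G\neq 0$. For the "if" direction one uses that $\det G\in\mathfrak{o}^\times$ makes $G^{-1}\in M_D(\mathfrak{o})$ (via the adjugate formula), so $Gx\in\mathfrak{o}^D\Rightarrow x=G^{-1}(Gx)\in\mathfrak{o}^D$; the reverse containment is immediate. For the "only if" direction, if $\det\bar G=0$ there is a nonzero $\bar u\in\mbb F^D$ with $\bar G\bar u=0$, i.e. a $u\in\mathfrak{o}^D\setminus\mathfrak{m}^D$ with $Gu\in\mathfrak{m}^D=\varepsilon\mathfrak{o}^D$; then $x:=\varepsilon^{-1}u\notin\mathfrak{o}^D$ yet $Gx\in\mathfrak{o}^D$, so the solution lattice is strictly larger than $\mathfrak{o}^D$, i.e. $\mathcal{L}^\sharp\supsetneq\mathcal{L}$. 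I would also remark that the equivalence is basis-independent because changing the $\mathfrak{o}$-basis of $\mathcal{L}$ replaces $G$ by $P^{t}GP$ with $P\in GL_D(\mathfrak{o})$, which preserves both "$G\in M_D(\mathfrak{o})$" and "$\det G\in\mathfrak{o}^\times$", and correspondingly preserves non-degeneracy of $\bar G$ over $\mbb F$.
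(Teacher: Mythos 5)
Your proof is correct and takes essentially the same route as the paper's: both translate the two conditions into statements about the Gram matrix with respect to an $\mathfrak{o}$-basis, use the adjugate of the Gram matrix (with $\det$ a unit) to show a primitive lattice contains its dual, and in the degenerate case produce a vector of the form $\varepsilon^{-1}u$ lying in $\mathcal{L}^\sharp\setminus\mathcal{L}$. Your packaging of both conditions into the single criterion ``Gram matrix in $M_D(\mathfrak{o})$ with unit determinant'' is just a slightly more systematic presentation of the same argument.
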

\begin{proof}
 Let $\{x_1,\cdots,x_D\}$ be a basis of $\mcal L$ and $B=(b_{ij})_{D\times D}$ the associated matrix of the bilinear form under this basis.
 Denote by $B^\ast=(b_{ij}^\ast)_{D\times D}$ the adjoint matrix of $B$.

Suppose that $\mcal L$ is primitive. Then we have $b_{ij}, b_{ij}^\ast \in \mathfrak{o}$ and ${\rm det}(B) \in \mathfrak{o}\backslash \mathfrak{m}$. We only need to verify that $\mcal L^\sharp\subset \mcal{L}$. Let $v=a_1x_1+\cdots+a_Dx_D\in  \mcal L^\sharp$.
Then $Q(x_i,v)\in \mathfrak{o}$ for $1\leq i\leq D$, which means that
\begin{equation}\label{left multiply}
B\begin{bmatrix}
a_1 \\
\vdots \\
a_D\\
\end{bmatrix} \in \mathfrak{o}^D.
\end{equation}
By left multiplying $B^\ast$ on the two sides of \eqref{left multiply}, we have
\begin{align*}
{\rm det}(B)a_i \in \mathfrak{o} \Rightarrow a_i \in \mathfrak{o} \Rightarrow \mcal L=\mcal L^\sharp.
\end{align*}

Conversely, suppose that $\mcal L=\mcal L^\sharp$. It is apparent that $\mcal L$ is $\mathfrak{o}$-valued. If the induced bilinear form $\overline{Q}$ is degenerate on $\mcal L /\varepsilon \mcal L$, then there exists a vector $v=\sum a_ix_i \in \mcal L \backslash \varepsilon \mcal L$ such that $Q(v,x_i) \in \mathfrak{m}$ for $1\leq i\leq D$. This implies that $\varepsilon^{-1}v \in \mcal L^\sharp$, which is in contradiction to $\mcal L=\mcal L^\sharp$. We complete the proof.
\end{proof}

By a similar argument as that of \cite[Theorem 1.26]{W97} and \cite[Lemma 3.1.1]{FL14}, we have the following proposition.

\begin{proposition}\label{perfect basis}
A lattice $\mcal L$ is primitive if and only if there exists a basis
$\{v_1,\cdots, v_D\}$ of $\mcal L$ such that $Q(v_i,v_j)=\delta_{i,D+1-j}$.
\end{proposition}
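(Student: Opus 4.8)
The plan is to prove the two implications separately; the ``if'' direction is immediate, while the converse is a lifting argument from the residue field $\mbb F$ to $\mathfrak{o}$. For ``if'', if $\{v_1,\dots,v_D\}$ is a basis of $\mcal L$ with $Q(v_i,v_j)=\delta_{i,D+1-j}$, then the matrix of $Q$ restricted to $\mcal L$ in this basis is exactly the matrix $J$ of \eqref{associated matrix}; in particular $Q(\mcal L,\mcal L)\subset\mathfrak{o}$, so $\mcal L$ is $\mathfrak{o}$-valued, and the induced bilinear form $\overline Q$ on $\mcal L/\varepsilon\mcal L$ has matrix $\overline J$, which is invertible over $\mbb F$. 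Hence $\mcal L$ is primitive. (One may also note $\det J=\pm 1\in\mathfrak{o}^\times$ and invoke Proposition \ref{dual property}.)

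For ``only if'', assume $\mcal L$ is primitive. First I would pin down the isometry type of $\overline Q$ on $\overline{\mcal L}:=\mcal L/\varepsilon\mcal L$. Pick any $\mathfrak{o}$-basis $\{x_1,\dots,x_D\}$ of $\mcal L$ with matrix $B$; primitivity gives $\det B\in\mathfrak{o}^\times$. Expressing the $x_i$ in the standard basis of $V$ by a matrix $C\in\mathrm{GL}_D(F)$ yields $B=CJC^{\mathrm T}$, so $(\det C)^2=\det B/\det J\in\mathfrak{o}^\times$, which forces $\det C\in\mathfrak{o}^\times$ and therefore $\overline{\det B}\equiv\overline{\det J}$ modulo $(\mbb F^\times)^2$. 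Since non-degenerate symmetric bilinear forms over the finite field $\mbb F$ of a fixed rank are classified by their discriminant, $\overline Q$ is equivalent to the form with matrix $\overline J$, and the finite-field statement cited in the excerpt (\cite[Theorem 1.26]{W97}, and its type-$D$ incarnation used in \cite{FL14}) provides an $\mbb F$-basis $\{\bar v_1,\dots,\bar v_D\}$ of $\overline{\mcal L}$ with $\overline Q(\bar v_i,\bar v_j)=\delta_{i,D+1-j}$.

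Next I would lift and correct by successive approximation. Choose lifts $v_i^{(0)}\in\mcal L$ of $\bar v_i$; then the matrix $B^{(0)}$ of $\{v_i^{(0)}\}$ satisfies $B^{(0)}=J+\varepsilon^{m_0}S_0$ with $m_0=1$ and $S_0$ symmetric over $\mathfrak{o}$. Inductively, suppose $\{v_i^{(k)}\}$ is an $\mathfrak{o}$-basis of $\mcal L$ with matrix $B^{(k)}=J+\varepsilon^{m_k}S_k$, $S_k$ symmetric over $\mathfrak{o}$ and $m_k\geq 1$. Put $H_k=-\tfrac12 JS_k$, $A_k=I+\varepsilon^{m_k}H_k$, and $v_i^{(k+1)}=\sum_j (A_k)_{ji}\,v_j^{(k)}$. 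Because $\tfrac12\in\mathfrak{o}$ we have $A_k\in\mathrm{GL}_D(\mathfrak{o})$, so $\{v_i^{(k+1)}\}$ is again a basis of $\mcal L$; and using $J^{\mathrm T}=J$, $J^2=I$ and $S_k^{\mathrm T}=S_k$ one checks that in $B^{(k+1)}=A_k^{\mathrm T}B^{(k)}A_k$ the coefficient of $\varepsilon^{m_k}$ is $S_k+H_k^{\mathrm T}J+JH_k=0$, whence $B^{(k+1)}=J+\varepsilon^{2m_k}S_{k+1}$ with $S_{k+1}$ symmetric over $\mathfrak{o}$. The essential fact here is that the linear map $H\mapsto H^{\mathrm T}J+JH$ on $D\times D$ matrices is onto the symmetric matrices, with explicit section $S\mapsto-\tfrac12 JS$. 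Since $v_i^{(k+1)}\equiv v_i^{(k)}\pmod{\varepsilon^{m_k}\mcal L}$ and $m_k\to\infty$, and $\mcal L$ is $\varepsilon$-adically complete, the sequences converge to a basis $\{v_1,\dots,v_D\}$ of $\mcal L$ whose matrix is $\lim_k B^{(k)}=J$, i.e. $Q(v_i,v_j)=\delta_{i,D+1-j}$, as required.

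The step I expect to be the main obstacle is this lifting/approximation: one has to make sure every correction matrix $A_k$ has entries in $\mathfrak{o}$ (so that the modified vectors stay inside $\mcal L$, not merely in $V$) and that the iteration converges $\varepsilon$-adically, which is exactly where completeness of $\mathfrak{o}$ and the odd residue characteristic (to divide by $2$) enter. If one prefers to stay closer to the arguments of \cite[Theorem 1.26]{W97} and \cite[Lemma 3.1.1]{FL14}, an alternative is to first extract from a lift of a hyperbolic pair in $\overline{\mcal L}$ a single primitive isotropic vector $v_1\in\mcal L\setminus\varepsilon\mcal L$ with a partner $v_D\in\mcal L$ satisfying $Q(v_1,v_D)=1$ and $Q(v_D,v_D)=0$, split off $\mathfrak{o}v_1\oplus\mathfrak{o}v_D$ as an orthogonal summand of $\mcal L$ and induct on $d$; producing that first pair still comes down to the same elementary successive-approximation estimate.
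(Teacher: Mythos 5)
Your proof is correct, but it takes a genuinely different route from the paper's. The paper works directly over $\mathfrak{o}$: after arranging (using non-degeneracy of the residue form and odd characteristic) a basis vector with $Q(x_1,x_1)\in\mathfrak{o}\setminus\mathfrak{m}$, it runs a Gram--Schmidt diagonalization of the Gram matrix inside $\mcal L$, then writes each diagonal unit $d_{ii}$ as $s_{ii}c_i^2$ with $s_{ii}\in\mbb F^{\times}$ and $c_i$ a $1$-unit (a scalar Hensel-type step), so that after rescaling the Gram matrix becomes a constant diagonal matrix over $\mbb F$; only at that final stage does it invoke \cite[Theorem 1.26]{W97} to reach the hyperbolic basis. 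You instead normalize the residue form first --- including the discriminant comparison $\det B=(\det C)^2\det J$ identifying $\overline Q$ with the reduction of $J$, a point the paper leaves implicit when citing \cite{W97} --- and then lift a hyperbolic basis of $\mcal L/\varepsilon\mcal L$ by a matrix-valued successive approximation, using the explicit section $S\mapsto-\tfrac12 JS$ of $H\mapsto H^{\mathrm T}J+JH$ together with $\varepsilon$-adic completeness of $\mathfrak{o}$. Both arguments rely on odd characteristic and on the finite-field normal form; yours makes the roles of completeness and of the discriminant fully explicit and transfers verbatim to any complete discrete valuation ring with odd residue characteristic, while the paper's avoids any infinite iteration on matrices (only scalar square roots of $1$-units) at the cost of the preliminary diagonalization. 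The one detail worth adding to your write-up is a one-line reason why the limiting vectors form a basis of $\mcal L$: for instance, the transition matrix $\prod_k A_k$ converges in $\mathrm{GL}_D(\mathfrak{o})$ since $A_k\equiv I \pmod{\varepsilon^{m_k}}$, or, alternatively, the $\mathfrak{o}$-span of the limit vectors is primitive and hence equals its own dual, which together with Proposition \ref{dual property} forces it to coincide with $\mcal L$.
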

\begin{proof}
One side is clear.
Suppose that $\mcal L$ is a primitive lattice.
Let $\{x_1,\cdots,x_D\}$ be a basis of $\mcal L$ and $B=(b_{ij})_{D\times D}$ the associated matrix of the bilinear form under this basis.
Then we have $b_{ij} \in \mathfrak{o}$ and ${\rm det}(B) \in \mathfrak{o}\backslash \mathfrak{m}$. Without loss of generality, we may assume that $b_{11} \in \mathfrak{o} \backslash \mathfrak{m}$.
Let
$$x_1'=x_1, \ \ x_i'=x_i-b_{1,i}b_{11}^{-1}x_i \ \ {\rm for} \ 2\leq i\leq D.$$
Then $\{x_1',\cdots,x_D'\}$ is a basis of $\mcal L$ and the matrix $B'=(b_{ij}')_{D\times D}$ of the bilinear form $Q$ under this basis is
$$B'=\begin{bmatrix}
b_{11}' & 0 & \cdots & 0  \\
0 & b_{22}' & \cdots & b_{2,D}' \\
\vdots & \vdots & \ddots & \vdots  \\
0 & b_{D,2}' & \cdots & b_{D,D}' \\
\end{bmatrix}.
$$
Performing the same produce, we can find a basis $\{x_1'',\cdots,x_D''\}$ of $\mcal L$ such that the associated matrix of the bilinear form $Q$ under this basis is a diagonal matrix, denoted by $D={\rm Diag}(d_{11},\cdots,d_{DD})$, where $d_{ii} \in \mathfrak{o} \backslash \mathfrak{m}$ for $1\leq i\leq D$.
Note that there exist $c_i \in \mathfrak{o} \backslash \mathfrak{m}$ and $c_i|_{\varepsilon=0}=1$ such that $d_{ii}=s_{ii}c_i^2$, where $s_{ii}=d_{ii}|_{\varepsilon=0}$.
 According to \cite[Theorem 1.26]{W97}, there exists a basis $\{v_1 ,\cdots, v_D\}$ of $\mcal L$ such that the associated matrix under this basis is of the form \eqref{associated matrix}. We complete the proof.
\end{proof}

Let $\mcal{M}, \mcal{L}$ be two lattices such that $\varepsilon\mcal{L} \subset \mcal{M}\subset \mcal{L}=\mcal{L}^\sharp$ and $Q(\mathcal{M},\mathcal{M}) \subset \mathfrak{m}$. Then $\mathcal{M}/\varepsilon\mathcal{L}$ is an isotropic subspace of $\mathcal{L}/\varepsilon\mathcal{L}$, whose complement space is $\mathcal{M}^\ast/\varepsilon\mathcal{L}$.
In particular, if $\mathcal{M}^\ast=\mathcal{M}$, then $\mathcal{M}/\varepsilon\mathcal{L}$ is a maximal isotropic subspace.

  By a similar process of extending basis of finite type, we have the following proposition.
 \begin{proposition} \label{affine extend}
 Let $\mcal M, \mcal L$ be two lattices such that $\varepsilon \mcal L \subset \mcal M \subset \mcal L=\mcal L^\sharp$.
 Assume that $Q(\mcal M,\mcal M) \subset \mathfrak {m}$ and $ |\mcal M / \varepsilon L |= a$.
 Then there exists a
basis $\{v_1,\cdots, v_D\}$ of $\mcal L$ such that $Q(v_i,v_j)=\delta_{i,D+1-j}$ and $\{v_1,\cdots,v_a, \varepsilon v_{a+1},\cdots, \varepsilon v_D\}$ a basis of $\mcal M$.
 \end{proposition}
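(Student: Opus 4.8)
The plan is to argue by induction on $a$, peeling off one hyperbolic plane contained in $\mathcal{M}$ at each stage. By Proposition~\ref{dual property} the hypothesis $\mathcal{L}=\mathcal{L}^{\sharp}$ says exactly that $\mathcal{L}$ is primitive, and this is what makes the peeling possible. In the base case $a=0$ we have $\mathcal{M}=\varepsilon\mathcal{L}$, and the claim is immediate from Proposition~\ref{perfect basis}: if $\{v_1,\dots,v_D\}$ is a basis of $\mathcal{L}$ with $Q(v_i,v_j)=\delta_{i,D+1-j}$, then $\{\varepsilon v_1,\dots,\varepsilon v_D\}$ is a basis of $\varepsilon\mathcal{L}=\mathcal{M}$. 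So assume $a\ge1$.

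For the inductive step I would first build a hyperbolic plane. Choose $u\in\mathcal{M}\setminus\varepsilon\mathcal{L}$; then $u$ is part of a basis of $\mathcal{L}$ (its image in $\mathcal{L}/\varepsilon\mathcal{L}$ is nonzero), and $Q(u,u)\in\mathfrak{m}$ because $Q(\mathcal{M},\mathcal{M})\subset\mathfrak{m}$. The image of the $\mathfrak{o}$-linear map $x\mapsto Q(u,x)$ on $\mathcal{L}$ is an ideal of $\mathfrak{o}$; were it contained in $\mathfrak{m}$ we would get $\varepsilon^{-1}u\in\mathcal{L}^{\sharp}=\mathcal{L}$, contradicting $u\notin\varepsilon\mathcal{L}$, so there is $u'\in\mathcal{L}$ with $Q(u,u')=1$. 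The rank two sublattice $\mathfrak{o}u\oplus\mathfrak{o}u'$ is primitive, its Gram determinant $Q(u,u)Q(u',u')-1$ being a unit; hence, using $\mathrm{char}\,\mathbb{F}\neq2$ to solve the relevant quadratic over $\mathfrak{o}$ (by Hensel's lemma, equivalently by applying Proposition~\ref{perfect basis} to this rank two lattice), one can replace $u'$ by some $\tilde u\in\mathfrak{o}u\oplus\mathfrak{o}u'$ and adjust $u$ by an element of $\mathfrak{m}\mathcal{L}=\varepsilon\mathcal{L}$, so that afterwards $Q(u,u)=Q(\tilde u,\tilde u)=0$, $Q(u,\tilde u)=1$, still with $u\in\mathcal{M}$, $\tilde u\in\mathcal{L}$ and $u\notin\varepsilon\mathcal{L}$. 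Put $H=\mathfrak{o}u\oplus\mathfrak{o}\tilde u$, a hyperbolic plane on which $Q$ is non-degenerate.

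Next I would split both lattices along $H$. Since $u,\tilde u\in\mathcal{L}=\mathcal{L}^{\sharp}$, every $x\in\mathcal{L}$ has $Q(x,u),Q(x,\tilde u)\in\mathfrak{o}$, so $x=\bigl(Q(x,\tilde u)u+Q(x,u)\tilde u\bigr)+y$ with $y\bot H$ and $y\in\mathcal{L}$; as $Q$ is non-degenerate on $H$ this gives an orthogonal decomposition $\mathcal{L}=H\oplus\mathcal{L}'$, $\mathcal{L}':=\mathcal{L}\cap H^{\bot}$, and $\mathcal{L}'$ is again primitive, in the split space $H^{\bot}$ of dimension $D-2$. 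For $\mathcal{M}$: if $x\in\mathcal{M}$ then $Q(x,u)\in\mathfrak{m}$ (as $x,u\in\mathcal{M}$), so $Q(x,u)\tilde u\in\mathfrak{m}\mathcal{L}\subset\mathcal{M}$, while $Q(x,\tilde u)u\in\mathcal{M}$ since $u\in\mathcal{M}$; hence the $H$-component of $x$ lies in $\mathcal{M}$ and $\mathcal{M}=(\mathcal{M}\cap H)\oplus\mathcal{M}'$ with $\mathcal{M}':=\mathcal{M}\cap\mathcal{L}'$. Moreover $\mathcal{M}\cap H=\mathfrak{o}u\oplus\mathfrak{m}\tilde u$ (an element $\beta u+\gamma\tilde u\in H$ lies in $\mathcal{M}$ iff $\gamma\in\mathfrak{m}$: on one hand $\gamma=Q(\beta u+\gamma\tilde u,u)\in Q(\mathcal{M},\mathcal{M})\subset\mathfrak{m}$, on the other $\mathfrak{m}\tilde u\subset\varepsilon\mathcal{L}\subset\mathcal{M}$), so $|(\mathcal{M}\cap H)/\varepsilon H|=1$, whence $\varepsilon\mathcal{L}'\subset\mathcal{M}'\subset\mathcal{L}'$, $Q(\mathcal{M}',\mathcal{M}')\subset\mathfrak{m}$ and $|\mathcal{M}'/\varepsilon\mathcal{L}'|=a-1$. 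Applying the inductive hypothesis to $(\mathcal{L}',\mathcal{M}')$ and iterating, after $a$ steps one obtains hyperbolic planes $H_k=\mathfrak{o}u_k\oplus\mathfrak{o}\tilde u_k$ ($1\le k\le a$, $u_k\in\mathcal{M}$) and a primitive lattice $\mathcal{L}^{(a)}$ of rank $D-2a$ with $\mathcal{L}=H_1\oplus\cdots\oplus H_a\oplus\mathcal{L}^{(a)}$ (mutually orthogonal), $\mathcal{M}=\bigoplus_{k}(\mathfrak{o}u_k\oplus\varepsilon\mathfrak{o}\tilde u_k)\oplus\varepsilon\mathcal{L}^{(a)}$, together with (Proposition~\ref{perfect basis}) a basis $\{w_1,\dots,w_{D-2a}\}$ of $\mathcal{L}^{(a)}$ with $Q(w_i,w_j)=\delta_{i,(D-2a)+1-j}$. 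Then the basis of $\mathcal{L}$ given by $v_i=u_i$ for $1\le i\le a$, $v_{a+i}=w_i$ for $1\le i\le D-2a$, and $v_{D-k+1}=\tilde u_k$ for $1\le k\le a$ satisfies $Q(v_i,v_j)=\delta_{i,D+1-j}$ (a routine block-by-block check, using $a\le d$ so that the partner index $D+1-i$ of every $i\le a$ exceeds $a$), and $\{v_1,\dots,v_a,\varepsilon v_{a+1},\dots,\varepsilon v_D\}=\{u_k\}\cup\{\varepsilon w_l\}\cup\{\varepsilon\tilde u_k\}$ spans $\mathcal{M}$, as required.

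The main obstacle is the inductive step. Producing the isotropic partner $\tilde u$, rather than merely a $u'$ with $Q(u,u')=1$, requires solving a quadratic over $\mathfrak{o}$, which is precisely where $\mathrm{char}\,\mathbb{F}\neq2$ is used; and checking that the splitting drops the invariant $|\mathcal{M}/\varepsilon\mathcal{L}|$ by exactly one is where the two hypotheses $\mathcal{L}=\mathcal{L}^{\sharp}$ and $Q(\mathcal{M},\mathcal{M})\subset\mathfrak{m}$ are jointly essential. One should also record that $\mathcal{L}'$, being primitive in the split space $H^{\bot}$, again satisfies the hypotheses of the proposition, so both the induction and the closing appeal to Proposition~\ref{perfect basis} are legitimate.
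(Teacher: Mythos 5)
Your proof is correct, and it follows exactly the route the paper has in mind: the paper omits the argument with the remark that it is ``a similar process of extending basis of finite type'' (as in \cite[Theorem 1.26]{W97} and \cite[Lemma 3.1.1]{FL14}), and your induction on $a$ --- splitting off a hyperbolic plane $\mathfrak{o}u\oplus\mathfrak{o}\tilde u$ with $u\in\mathcal{M}\setminus\varepsilon\mathcal{L}$, using $\mathcal{L}=\mathcal{L}^{\sharp}$ to produce the dual vector, odd characteristic (Hensel) to make the pair isotropic, and $Q(\mathcal{M},\mathcal{M})\subset\mathfrak{m}$ to see $\mathcal{M}\cap H=\mathfrak{o}u\oplus\mathfrak{m}\tilde u$ --- is precisely that Witt-type basis-extension carried out in full for lattices.
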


We now consider the set $Z$ of pair of lattices
\begin{equation*}
Z=\big\{(\mathcal{M}, \mathcal{L}) \mid \mathcal{M}=\mathcal{M}^\ast,\mathcal{L}=\mathcal{L}^\sharp, \  \varepsilon\mathcal{L}\subset \mathcal{M} \subset \mathcal{L}\big\}.
\end{equation*}
By \eqref{orthogonal equivalent}, there exists an
${\rm O}_{ F}(V)$-action on $Z$ by
$g \cdot (\mathcal{M},\mathcal{L})\mapsto (g\mathcal{M},g\mathcal{L}), \forall g \in {\rm O}_{F}(V)$. Moreover, ${\rm O}_{F}(V)$ acts transitively on $Z$ by Proposition \ref{affine extend}.

 \begin{proposition}\label{odd}
 Let $(\mcal M_0, \mcal L_0), (\mcal M, \mcal L)$ be two pairs in $Z$ such that $$|\mcal{M}/\mcal{M}\cap\mcal{M}_0|+|\mcal{L}/\mcal{L}\cap\mcal{L}_0|=1.$$
Then for $(\mcal M', \mcal L') \in Z$, we have
  \begin{equation}\label{odd formular}
  | \mcal{M}'/ \mcal M'\cap \mcal M_0|+ |\mcal{M}'/ \mcal M'\cap \mcal M|+ |\mcal{L}'/ \mcal L'\cap \mcal L_0|+| \mcal{L}'/ \mcal L'\cap \mcal L|\equiv 1 \ {\rm mod} \ 2.
  \end{equation}
 \end{proposition}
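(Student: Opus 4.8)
The plan is to use the full strength of the hypothesis --- that $|\mathcal{M}/\mathcal{M}\cap\mathcal{M}_0|+|\mathcal{L}/\mathcal{L}\cap\mathcal{L}_0|$ equals $1$ as an integer, not merely modulo $2$ --- to reduce to a computation inside a $2$-dimensional quadratic space over $\mathbb{F}$, in the spirit of Section~2. First I would record the elementary fact that if $\mathcal{A}=\mathcal{A}^\ast$ and $\mathcal{B}=\mathcal{B}^\ast$ (resp.\ $\mathcal{A}=\mathcal{A}^\sharp$ and $\mathcal{B}=\mathcal{B}^\sharp$) then $|\mathcal{A}/\mathcal{A}\cap\mathcal{B}|=|\mathcal{B}/\mathcal{A}\cap\mathcal{B}|$: the relative covolume $d(\mathcal{A},\mathcal{B}):=|\mathcal{A}/\mathcal{A}\cap\mathcal{B}|-|\mathcal{B}/\mathcal{A}\cap\mathcal{B}|$ is additive, is unchanged by $\varepsilon$, and satisfies $d(\mathcal{A}^\sharp,\mathcal{B}^\sharp)=-d(\mathcal{A},\mathcal{B})$, hence vanishes on self-dual pairs. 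Since the two summands in the hypothesis are non-negative and add up to $1$, either $\mathcal{M}=\mathcal{M}_0$ (and $|\mathcal{L}/\mathcal{L}\cap\mathcal{L}_0|=1$) or $\mathcal{L}=\mathcal{L}_0$ (and $|\mathcal{M}/\mathcal{M}\cap\mathcal{M}_0|=1$); these two situations are interchanged by passing between $\sharp$-self-dual lattices (primitive for $Q$) and $\ast$-self-dual lattices (primitive for $\varepsilon^{-1}Q$), so it is enough to treat $\mathcal{L}=\mathcal{L}_0$. Then the two $\mathcal{L}'$-terms of \eqref{odd formular} cancel modulo $2$, and what must be shown is
\begin{equation*}
|\mathcal{M}'/\mathcal{M}'\cap\mathcal{M}_0|+|\mathcal{M}'/\mathcal{M}'\cap\mathcal{M}|\equiv 1\pmod 2
\end{equation*}
for every $\mathcal{M}'$ with $\mathcal{M}'=(\mathcal{M}')^\ast$, where $\mathcal{M}=\mathcal{M}^\ast$ and $\mathcal{M}_0=\mathcal{M}_0^\ast$ satisfy $|\mathcal{M}/\mathcal{M}\cap\mathcal{M}_0|=1$ and $\varepsilon\mathcal{L}\subseteq\mathcal{M},\mathcal{M}_0\subseteq\mathcal{L}=\mathcal{L}^\sharp$.

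Next I would set $R=\mathcal{M}\cap\mathcal{M}_0$ and $S=\mathcal{M}+\mathcal{M}_0$. Using $(\mathcal{M}\cap\mathcal{M}_0)^\ast=\mathcal{M}^\ast+\mathcal{M}_0^\ast$ one gets $S=R^\ast$ and $R=S^\ast$, and $|S/R|=2$ since $R$ has codimension $1$ in $\mathcal{M}$ and $\mathcal{M}$ has codimension $1$ in $S$ (here one uses $|\mathcal{M}_0/R|=|\mathcal{M}/R|$). As in Section~2, $Q$ induces a nondegenerate symmetric bilinear form on the $2$-dimensional $\mathbb{F}$-space $S/R$: it is $\mathfrak{o}$-valued on $S$ (because $S\subseteq\mathcal{L}=\mathcal{L}^\sharp$), it is $\mathfrak{m}$-valued on $S\times R$ (because $R=S^\ast$), and its radical is $(S^\ast\cap S)/R=0$. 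The lines $\mathcal{M}/R$ and $\mathcal{M}_0/R$ are isotropic ($Q(\mathcal{M},\mathcal{M})\subseteq\mathfrak{m}$ and $Q(\mathcal{M}_0,\mathcal{M}_0)\subseteq\mathfrak{m}$) and distinct, so $S/R$ is a hyperbolic plane and these are its only two isotropic lines. Now put $T=\mathcal{M}'\cap S$; since $\mathcal{M}'\cap\mathcal{M}\subseteq T\subseteq\mathcal{M}'$ and likewise for $\mathcal{M}_0$, expanding $|\mathcal{M}'/\mathcal{M}'\cap\mathcal{M}|=|\mathcal{M}'/T|+|T/\mathcal{M}'\cap\mathcal{M}|$ and applying the parallelogram identity shows the displayed left-hand side is congruent mod $2$ to $|(T\cap\mathcal{M})/(T\cap R)|+|(T\cap\mathcal{M}_0)/(T\cap R)|$, each term lying in $\{0,1\}$ because $|\mathcal{M}/R|=|\mathcal{M}_0/R|=1$.

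The key assertion, which I expect to be the main obstacle, is that $|T/(T\cap R)|=1$, i.e.\ $T$ meets $S/R$ in exactly a line. This comes from $\ast$-self-duality of $\mathcal{M}'$: the short exact sequence $0\to T/(T\cap R)\to S/R\to (S+\mathcal{M}')/(R+\mathcal{M}')\to 0$ (whose middle term surjects onto the right one with kernel $(R+T)/R\cong T/(T\cap R)$, by the modular law) gives $|S/R|=|T/(T\cap R)|+|(S+\mathcal{M}')/(R+\mathcal{M}')|$, and applying $\ast$ --- which reverses inclusions and preserves indices, with $(\mathcal{M}')^\ast=\mathcal{M}'$, $S^\ast=R$, $R^\ast=S$ --- identifies $|(S+\mathcal{M}')/(R+\mathcal{M}')|$ with $|T/(T\cap R)|$; hence $2\,|T/(T\cap R)|=|S/R|=2$. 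The resulting line $(T+R)/R$ is isotropic, because $Q(T,T)\subseteq Q(\mathcal{M}',\mathcal{M}')\subseteq\mathfrak{m}$, so it equals $\mathcal{M}/R$ or $\mathcal{M}_0/R$. Say $(T+R)/R=\mathcal{M}/R$; then $T\subseteq\mathcal{M}$, so $T=\mathcal{M}'\cap\mathcal{M}$, whence $|(T\cap\mathcal{M})/(T\cap R)|=|T/(T\cap R)|=1$, while $T\cap\mathcal{M}_0=\mathcal{M}'\cap\mathcal{M}\cap\mathcal{M}_0=\mathcal{M}'\cap R=T\cap R$ forces $|(T\cap\mathcal{M}_0)/(T\cap R)|=0$; the subcase $(T+R)/R=\mathcal{M}_0/R$ is symmetric. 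Either way the sum of the two indices is odd, finishing the case $\mathcal{L}=\mathcal{L}_0$, and the case $\mathcal{M}=\mathcal{M}_0$ is dealt with by the identical argument with $\sharp$ in place of $\ast$ (and $Q$ rescaled by $\varepsilon$) and the $\mathcal{L}$'s in place of the $\mathcal{M}$'s. The conceptual content is that $|\mathcal{M}'/\mathcal{M}'\cap\mathcal{M}_0|+|\mathcal{M}'/\mathcal{M}'\cap\mathcal{M}|\bmod 2$ behaves like the difference of the two ``spinor types'' of $\mathcal{M}$ and $\mathcal{M}_0$, which the hypothesis pins to $1$; the $\ast$-duality index count is exactly what prevents $\mathcal{M}'$ from interacting symmetrically with $\mathcal{M}$ and $\mathcal{M}_0$ and forces it onto one side of the hyperbolic plane, so that the naive ``coincidence'' cases (both indices $0$, or both $1$) cannot occur.
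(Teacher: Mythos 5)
Your argument is correct, and its outer skeleton matches the paper's: both reduce, using that the hypothesis is an equality of integers, to the two cases $\mcal M=\mcal M_0$ or $\mcal L=\mcal L_0$, and both then prove the parity by showing the triple intersection is properly contained in exactly one of the two pairwise intersections. Where you genuinely diverge is in how that dichotomy is established. The paper handles the case $\mcal M=\mcal M_0$ explicitly via Proposition \ref{affine extend}: it chooses an adapted basis with $\mcal L=(\mcal L\cap\mcal L_0)\oplus\mbb F v_D$ and $\mcal L_0=(\mcal L\cap\mcal L_0)\oplus\mbb F\varepsilon^{-1}v_1$, and kills the two bad configurations (both containments proper, or both equalities) by evaluating $Q$ on explicit vectors against the primitivity of $\mcal L'$. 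You work basis-free in the case $\mcal L=\mcal L_0$: $S/R=(\mcal M+\mcal M_0)/(\mcal M\cap\mcal M_0)$ is a hyperbolic plane whose only isotropic lines are $\mcal M/R$ and $\mcal M_0/R$, and your $\ast$-duality index count (using $S^\ast=R$, $(\mcal M')^\ast=\mcal M'$, and that duality reverses inclusions and preserves indices) forces the image of $\mcal M'$ in $S/R$ to be exactly one isotropic line, which gives both exclusions at once. Your route is more structural and explains conceptually why the symmetric configurations cannot occur, at the cost of invoking the standard index-duality facts; the paper's route is more computational but self-contained given Proposition \ref{affine extend}. One small point worth spelling out in your mirrored case $\mcal M=\mcal M_0$: the rescaled form $\varepsilon Q$ is $\mathfrak{o}$-valued on $S=\mcal L+\mcal L_0$ because $S\subset\varepsilon^{-1}\mcal M$ and $\varepsilon^{-1}\mcal M$ is self-dual for $\varepsilon Q$ (equivalently, $\varepsilon\mcal L_0\subset\mcal M_0=\mcal M$ gives $Q(\mcal L,\mcal L_0)\subset\varepsilon^{-1}\mathfrak{o}$); this replaces your use of $S\subset\mcal L=\mcal L^\sharp$ in the treated case and closes the ``identical argument'' claim.
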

 \begin{proof}
 We shall to show that \eqref{odd formular} holds in both of the following two cases:
 \begin{equation}\label{codimension}
   ({\rm a}) \ \mcal{M}=\mcal{M}_0,  \ |\mcal{L}/\mcal{L}\cap\mcal{L}_0|=1; \ \
  ({\rm b}) \ \mcal{L}=\mcal{L}_0,  \ |\mcal{M}/\mcal{M}\cap\mcal{M}_0|=1.
 \end{equation}
  Suppose that case $({\rm a})$ in \eqref{codimension} holds. We only need to verify that
  $$|\mcal{L}'/\mcal{L}\cap\mcal{L}'|+|\mcal{L}'/\mcal{L}\cap\mcal{L}_0| \equiv 1 \ {\rm mod} \ 2.$$
 According to Proposition \ref{affine extend}, there exists a basis $\{v_1,\cdots,v_D\}$ of $\mcal{L}$ such that $Q(v_i,v_j)=\delta_{i,D+1-j}$ and $\{\varepsilon^{-1}v_1,v_2,\cdots,v_{D-1},\varepsilon v_D\}$ a basis of $\mcal{L}_0$.
  We have
 \begin{align*}
 \mcal{L}=\mcal{L}\cap\mcal{L}_0\oplus \mbb F v_D; \ \ \ \ \mcal{L}_0=\mcal{L}\cap\mcal{L}_0\oplus \mbb F\varepsilon^{-1}v_1.
 \end{align*}
 The result will be verified by showing that $\mcal{L}\cap\mcal{L}_0\cap\mathcal{L}'$ is exactly properly contained in exactly one of the $\mcal{L}\cap\mcal{L}'$ and $\mcal{L}\cap\mcal{L}_0$.
 We verified it by contradiction.
  If $\mcal{L}\cap\mcal{L}_0\cap\mcal{L}'$ is properly contained in $\mcal{L}\cap\mcal{L}'$ and $\mcal{L}'\cap\mcal{L}_0$,
 then there exist $y_1,y_2 \in \mcal{L}\cap\mcal{L}_0$ such that $v_D+y_1$ and $\varepsilon^{-1}v_1+y_2 \in \mcal{L}'$, which
 is absurd since $\mcal{L}'$ is primitive.
So we may suppose that $\mcal{L}\cap\mcal{L}_0\cap\mcal{L}'=\mcal{L}\cap\mcal{L}'=\mcal{L}_0\cap\mcal{L}'$.
It is equivalent that $\mcal{L}+\mcal{L}'=\mcal{L}_0+\mcal{L}'$, which means that  $v_D=x+y+a\varepsilon^{-1}v_1 \in \mathcal{L}$ for some $x \in \mcal{L}', y\in \mcal{L} \cap\mcal{L}_0$ and $a \in \mbb F$.
We get $a=0$ since $Q(x,x)\in \mathfrak{o}$.
Similarly, there exist $x' \in \mcal L'$ and $y' \in \mcal L\cap \mcal L_0$ such that $\varepsilon^{-1}v_1=x'+y'$.
This is a contradiction to $Q(x,x') \in \mathfrak{o}$. We complete the proof of the case $(a)$.

 The proof of case $({\rm b})$ is a counterpart in case $({\rm a})$ and so we omit it.
 \end{proof}

Similar to the argument of finite case, which is formal and not reproduced here, we have the following proposition.

 \begin{proposition}\label{different orbits}
 Let $(\mathcal{M}_0,\mathcal{L}_0),(\mathcal{M},\mathcal{L})$ be two pairs in $Z$ defined as in Proposition \ref{odd}.
 Then $(\mcal{M}_0, \mcal{L}_0), (\mathcal{M}, \mathcal{L})$ are in different ${\rm SO}_F(V)$-orbits.
 Moreover, there exists $g \in {\rm O}_F(V)\backslash {\rm SO}_F(V)$ such that $g(\mcal M_0, \mcal L_0)=(\mcal M,\mcal L)$.
 \end{proposition}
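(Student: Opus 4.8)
The plan is to reduce everything to the finite type $D$ statements of Section~2, using the self-duality built into the pairs in $Z$. Since $|\mcal M/\mcal M\cap\mcal M_0|$ and $|\mcal L/\mcal L\cap\mcal L_0|$ are nonnegative integers summing to $1$, we are automatically in one of the two cases \eqref{codimension}; I will treat case (a), $\mcal M=\mcal M_0$ and $|\mcal L/\mcal L\cap\mcal L_0|=1$, the other being entirely symmetric and addressed at the end. Put $\mcal N=\mcal M_0^{\sharp}$, so that $\varepsilon\mcal N=\mcal M_0^{\ast}=\mcal M_0$ and $\mcal N/\varepsilon\mcal N=\mcal M_0^{\sharp}/\mcal M_0$ is an $\mbb F$-space of dimension $D=2d$. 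It carries the symmetric bilinear form $\widetilde Q(\bar x,\bar y)=\varepsilon Q(x,y)|_{\varepsilon=0}$, which is well defined because $Q(\mcal N,\mcal N)\subset\varepsilon^{-1}\mathfrak o$ and $Q(\mcal N,\mcal M_0)\subset\mathfrak o$, and non-degenerate because $(\mcal M_0^{\sharp})^{\sharp}=\mcal M_0$. The first step is the dictionary: for any $(\mcal M_0,\mcal L')\in Z$ one has $\mcal M_0\subset\mcal L'\subset\mcal N$, and the subspace $\mcal L'/\mcal M_0$ of $\mcal N/\varepsilon\mcal N$ is maximal isotropic for $\widetilde Q$ --- isotropic because $\mcal L'$ is primitive, hence $Q(\mcal L',\mcal L')\subset\mathfrak o$, and of the right dimension $|\mcal L'/\mcal M_0|=d$. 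Applying this to $\mcal L_0$ and $\mcal L$ gives maximal isotropic subspaces $U_0=\mcal L_0/\mcal M_0$ and $U=\mcal L/\mcal M_0$ with $|U/U\cap U_0|=|\mcal L/\mcal L\cap\mcal L_0|=1$, so by Proposition \ref{if and only if} they lie in different ${\rm SO}_{\mbb F}(D)$-orbits.

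The key step is the determinant count. Suppose toward a contradiction that $g\in{\rm SO}_F(V)$ with $g(\mcal M_0,\mcal L_0)=(\mcal M,\mcal L)=(\mcal M_0,\mcal L)$. Then $g\mcal M_0=\mcal M_0$, so $g\mcal N=(g\mcal M_0)^{\sharp}=\mcal N$ by \eqref{orthogonal equivalent}; thus $g$ restricts to an $\mathfrak o$-automorphism of $\mcal N$ and induces $\bar g\in{\rm O}_{\mbb F}(\mcal N/\varepsilon\mcal N,\widetilde Q)$, since $g$ preserves $Q$. Because an $\mathfrak o$-basis of $\mcal N$ is an $F$-basis of $V$, we get $\det(g|_{\mcal N})=\det(g)=1$, whence $\det(\bar g)=\det(g|_{\mcal N})\bmod\mathfrak m=1$ and $\bar g\in{\rm SO}_{\mbb F}(D)$. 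But $\bar g$ carries $U_0=\mcal L_0/\mcal M_0$ onto $g\mcal L_0/\mcal M_0=\mcal L/\mcal M_0=U$, contradicting that $U_0$ and $U$ lie in different ${\rm SO}_{\mbb F}(D)$-orbits. Hence $(\mcal M_0,\mcal L_0)$ and $(\mcal M,\mcal L)$ are in different ${\rm SO}_F(V)$-orbits.

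The last assertion is then immediate: by Proposition \ref{affine extend} the group ${\rm O}_F(V)$ acts transitively on $Z$, so some $g\in{\rm O}_F(V)$ sends $(\mcal M_0,\mcal L_0)$ to $(\mcal M,\mcal L)$, and by what we just proved $g\notin{\rm SO}_F(V)$, i.e. $g\in{\rm O}_F(V)\backslash{\rm SO}_F(V)$. (If one wants $g$ explicitly: ${\rm O}_{\mbb F}(D)$ is generated by reflections, one of its elements outside ${\rm SO}_{\mbb F}(D)$ carries $U_0$ to $U$, and each such reflection lifts to the orthogonal reflection in a lift to $\mcal N$ of the corresponding anisotropic vector, which preserves $\mcal N$ because $Q$ of that lift is a unit times $\varepsilon^{-1}$; the product of the lifts has determinant $-1$ and sends $\mcal L_0/\mcal M_0$ to $\mcal L/\mcal M_0$, hence $\mcal L_0$ to $\mcal L$.)

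The only genuine obstacle is the determinant comparison of the second paragraph --- i.e.\ checking that reduction modulo $\varepsilon$ along the self-dual lattice $\mcal N$ really does map the stabilizer of $\mcal M_0$ in ${\rm SO}_F(V)$ into ${\rm SO}_{\mbb F}(D)$; once that is in place, everything else is the definitional translation to the finite quotient plus the results recalled in Section~2. For case (b), $\mcal L=\mcal L_0$ and $|\mcal M/\mcal M\cap\mcal M_0|=1$, one argues identically with the non-degenerate $\mbb F$-space $(\mcal L_0/\varepsilon\mcal L_0,\overline Q)$ in place of $(\mcal N/\varepsilon\mcal N,\widetilde Q)$: here $\mcal M/\varepsilon\mcal L_0$ and $\mcal M_0/\varepsilon\mcal L_0$ are maximal isotropic because $\mcal M=\mcal M^{\ast}$ and $\mcal M_0=\mcal M_0^{\ast}$, as already noted before Proposition \ref{affine extend}, they differ in dimension by $1$, and an $\mathfrak o$-basis of $\mcal L_0$ is again an $F$-basis of $V$, so the same determinant count forbids $g\in{\rm SO}_F(V)$ from interchanging the two pairs.
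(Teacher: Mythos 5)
Your argument is correct and is exactly the reduction-to-finite-type route that the paper gestures at (its "proof" is only the remark that the argument is "similar to the finite case"): you make that precise via the self-dual lattices $\mcal N=\mcal M_0^\sharp$ with the rescaled form, resp. $\mcal L_0$, and the one genuinely needed detail — that an element of ${\rm SO}_F(V)$ stabilizing the lattice reduces mod $\varepsilon$ to an element of ${\rm SO}_{\mbb F}(D)$, via the determinant computed in an $\mathfrak{o}$-basis — is handled correctly, after which Proposition \ref{if and only if} and the transitivity of ${\rm O}_F(V)$ on $Z$ finish both assertions. The only blemish is the phrase "they differ in dimension by $1$" in case (b): both $\mcal M/\varepsilon\mcal L_0$ and $\mcal M_0/\varepsilon\mcal L_0$ are $d$-dimensional; what you mean (and what the argument uses) is that their intersection has codimension $1$ in each, so the parity criterion applies.
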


 The following proposition shows that there are exactly two ${\rm SO}_F(V)$-orbits on $Z$.

\begin{proposition}\label{iff}
 For any two pairs $(\mathcal{M}_0, \mathcal{L}_0), (\mathcal{M}, \mathcal{L})\in Z$, we have
 $$|\mathcal{M}/{\mathcal{M}\cap \mathcal{M}_0}|+|\mcal{L}/\mcal{L}\cap \mcal{L}_0|\equiv 0 \ {\rm mod} \ 2$$
 if and only if there exists $g \in {\rm SO}_{F}(V)$ such that $g (\mathcal{M}_0,\mathcal{L}_0)=(\mathcal{M},\mathcal{L})$.
\end{proposition}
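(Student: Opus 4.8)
The plan is to promote the two preceding propositions to a complete ${\rm SO}_F(V)$-orbit invariant, the invariant being the parity of $\delta\big((\mcal M,\mcal L),(\mcal M',\mcal L')\big) := |\mcal M'/\mcal M'\cap \mcal M| + |\mcal L'/\mcal L'\cap \mcal L|$. First I would pin down the number of orbits. Since ${\rm O}_F(V)$ acts transitively on $Z$ and ${\rm SO}_F(V)$ is normal of index two, $Z$ is a single ${\rm SO}_F(V)$-orbit or a union of two, the latter happening exactly when the ${\rm O}_F(V)$-stabilizer of one — hence of every — point of $Z$ is contained in ${\rm SO}_F(V)$; since Proposition \ref{different orbits} exhibits two points of $Z$ in distinct ${\rm SO}_F(V)$-orbits, there are exactly two orbits and every such stabilizer lies in ${\rm SO}_F(V)$. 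I would also record that $\delta$ is ${\rm O}_F(V)$-invariant for the diagonal action and — using that $\sharp$ and $\ast$ are index-preserving, inclusion-reversing involutions together with $\mcal M = \mcal M^\ast$, $\mcal L = \mcal L^\sharp$ and \eqref{eqsharp} — that each of its two summands, hence $\delta$ itself, is symmetric in the two arguments. The proposition is then the assertion that two members of $Z$ lie in one ${\rm SO}_F(V)$-orbit if and only if $\delta$ is even on them.

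The core is a connectivity statement. Call $p,p' \in Z$ \emph{adjacent} when one of $|\mcal M'/\mcal M'\cap\mcal M|$, $|\mcal L'/\mcal L'\cap\mcal L|$ equals $1$ and the other equals $0$, i.e. the configuration of Propositions \ref{odd} and \ref{different orbits}. I claim that any $(\mcal M_0,\mcal L_0)$ and $(\mcal M,\mcal L)$ in $Z$ can be linked by a chain $p_0 = q_0, q_1, \dots, q_k = p$ of members of $Z$ with consecutive terms adjacent. I would prove this exactly as in the finite-field case: use Proposition \ref{affine extend} to fix a basis $\{v_1,\dots,v_D\}$ of $\mcal L_0$ with $Q(v_i,v_j)=\delta_{i,D+1-j}$ and $\mcal M_0 = \langle v_1,\dots,v_d,\varepsilon v_{d+1},\dots,\varepsilon v_D\rangle$, encode $(\mcal M,\mcal L)$ by a matrix recording its position relative to this basis, and reduce that matrix to the trivial one by a succession of elementary moves, each replacing exactly one of $\mcal M,\mcal L$ by a colength-one sublattice while staying inside $Z$ (by the symmetry of the two summands noted above, such a move is precisely an adjacent step). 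This is the step I expect to be the main obstacle: it is the only place where genuinely new lattice combinatorics enters, and checking that a one-step modification preserves both self-duality conditions — so that the chain really stays inside $Z$ — takes some care.

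Granting the chain, the rest is bookkeeping along it. Numerically, Proposition \ref{odd} applied to the adjacent pair $(q_0,q_1)$ and an arbitrary $q_j$, together with the symmetry of $\delta$, gives $\delta(q_0,q_j) \equiv 1 + \delta(q_1,q_j) \pmod{2}$; an induction on the chain length, applied to the shifted chains $q_i,q_{i+1},\dots,q_k$, then yields $\delta(q_0,q_j)\equiv j\pmod{2}$ for all $j$, in particular $\delta(p_0,p)\equiv k\pmod{2}$. Group-theoretically, Proposition \ref{different orbits} realizes each step $q_{i-1}\to q_i$ by some $g_i\in{\rm O}_F(V)\setminus{\rm SO}_F(V)$, so that $p = g p_0$ with $g = g_k\cdots g_1$ and ${\rm det}\,g=(-1)^k=(-1)^{\delta(p_0,p)}$. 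If $\delta(p_0,p)$ is even then $g\in{\rm SO}_F(V)$ carries $p_0$ to $p$ and the two pairs share an ${\rm SO}_F(V)$-orbit; if $\delta(p_0,p)$ is odd then $g\notin{\rm SO}_F(V)$, and if we also had $p = h p_0$ with $h\in{\rm SO}_F(V)$ then $g^{-1}h$ would be a determinant $-1$ element of ${\rm O}_F(V)$ fixing $p_0$, contradicting the first step — so the two pairs lie in different orbits. This yields the asserted equivalence.
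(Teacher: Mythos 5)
Your surrounding bookkeeping is sound and in fact mirrors the paper's: the parity induction along an adjacent chain via Proposition \ref{odd} (together with the symmetry of each summand, which follows from \eqref{eqsharp} and $\mcal M=\mcal M^\ast$, $\mcal L=\mcal L^\sharp$), the realization of each adjacent step by an element of ${\rm O}_F(V)\setminus{\rm SO}_F(V)$ via Proposition \ref{different orbits}, and the converse via the observation that stabilizers of points of $Z$ lie in ${\rm SO}_F(V)$ (the paper phrases this as a direct contradiction with Proposition \ref{different orbits}, which is the same argument). However, there is a genuine gap exactly where you flag it: the connectivity claim --- that any two pairs in $Z$ are joined by a chain of adjacent members of $Z$, with the number of elementary steps congruent mod $2$ to $|\mcal M/\mcal M\cap\mcal M_0|+|\mcal L/\mcal L\cap\mcal L_0|$ --- is asserted but not proved, and it is the entire technical content of the forward direction. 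The paper does not do this by a matrix reduction relative to a fixed basis; a normal form for a pair of lattice chains is only established later (Proposition \ref{fenjie}, in Section 4, and for complete flags), so within the tools available here your sketch would have to be built from scratch, and the delicate point you yourself name (that each one-step modification preserves $\mcal M=\mcal M^\ast$, $\mcal L=\mcal L^\sharp$ and $\varepsilon\mcal L\subset\mcal M\subset\mcal L$) is precisely what is not addressed.

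The paper's device is an explicit recursion: set $\mcal M_{i+1}=\varepsilon\mcal L_i+\mcal M\cap\mcal L_i$ and $\mcal L_{i+1}=\mcal M_{i+1}+\mcal L\cap\varepsilon^{-1}\mcal M_{i+1}$. Using \eqref{eqsharp} one checks inductively that $\mcal M_i^\ast=\mcal M_i$ and $\mcal L_i^\sharp=\mcal L_i$, so the chain stays in $Z$, and the displayed index identities show that the increments telescope, namely
\begin{equation*}
\Big|\frac{\mcal L}{\mcal L\cap\mcal L_0}\Big|=\sum_{a\geq 1}\Big|\frac{\mcal L\cap\mcal L_a}{\mcal L\cap\mcal L_{a-1}}\Big|,\qquad
\Big|\frac{\mcal M}{\mcal M\cap\mcal M_0}\Big|=\sum_{a\geq 1}\Big|\frac{\mcal M\cap\mcal M_a}{\mcal M\cap\mcal M_{a-1}}\Big|,
\end{equation*}
so that the chain terminates at $(\mcal M,\mcal L)$ and its total length has the right parity simultaneously --- the parity bookkeeping you do by induction comes for free from the counting. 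To complete your proposal you would either have to reproduce this construction (or an equivalent one) and verify the two self-duality conditions at every intermediate stage, including the refinement of each step into codimension-one moves inside $Z$; as written, the core of Proposition \ref{iff} remains unestablished.
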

\begin{proof}
 Assume that $|\mcal M/\mcal M\cap \mcal M_0|+|\mcal L/\mcal L\cap \mcal L_0| \equiv 0 \ {\rm mod} \ 2$.
   We shall define $(\mcal M_i, \mcal L_i)$ inductively
   by setting that
   $$\mcal{M}_{i+1}=\varepsilon \mcal{L}_i+\mcal{M}\cap \mcal{L}_i, \ \ \ \ \mcal{L}_{i+1}=\mcal{M}_{i+1}+\mcal{L}\cap \varepsilon^{-1}\mcal{M}_{i+1}, \ \ {\rm for} \  i \geq 0.$$
   By \eqref{eqsharp} and induction, we have $\mcal{M}_i^\ast=\mcal{M}_i, \mcal{L}_i^\sharp=\mcal{L}_i$.
 For $i, a \in \mathbb{N}$, we get
\begin{equation*}
  \mcal{M}\cap \varepsilon^{-a}\mcal{M}_i =\mcal{M}\cap \varepsilon^{-a}\mcal{L}_{i-1}
  =\mcal{M}\cap \varepsilon^{-a-1}\mcal{M}_{i-1}
  =\mcal{M}\cap \varepsilon^{-a-1}\mcal{L}_{i-2}.
\end{equation*}
This implies that
\begin{equation*}
\Big|\frac{\mcal{M}_i}{\mcal{M}_{i-1}\cap\mcal{M}_i}\Big|
=\Big|\frac{\mcal{M}\cap \mcal{L}_{i-1}}{\mcal{M}\cap \mcal{M}_{i-1}}\Big|
=\Big|\frac{\mcal{M}\cap\mcal{M}_i}{\mcal{M}\cap\mcal{M}_{i-1}}\Big|.
\end{equation*}
Similarly, we have
\begin{equation*}
  \mcal{L}\cap \varepsilon^{-a}\mcal{L}_i =
  \mcal{L}\cap\varepsilon^{-a-1}\mcal{M}_i
  =\mcal{L}\cap\varepsilon^{-a-1}\mcal{L}_{i-1}=
  \mcal{L}\cap\varepsilon^{-a-2}\mcal{M}_{i-1},
\end{equation*}
and
\begin{equation*}
  \Big|\frac{\mcal{L}_i}{\mcal{L}_i\cap\mcal{L}_{i-1}}\Big|
  =\Big|\frac{\mcal{L}\cap\varepsilon^{-1}\mcal{M}_{i-1}}{
  \mcal{L}\cap\mcal{L}_{i-1}}\Big|
   = \Big|\frac{\mcal{L}\cap\mcal{L}_i}{
   \mcal{L}\cap\mcal{L}_{i-1}}\Big|.
\end{equation*}
 Thus,
\begin{equation*}
\begin{split}
\Big|\frac{\mcal{L}}{\mcal{L}\cap\mcal{L}_0}\Big|&=\sum_{a\geq 1}\Big|\frac{\mcal{L}\cap \varepsilon^{-a}\mcal{L}_0}{\mcal{L}\cap\varepsilon^{-a+1}\mcal{L}_0}\Big|=\sum_{a\geq 1}\Big|\frac{\mcal{L}\cap \mcal{L}_a}{\mcal{L}\cap\mcal{L}_{a-1}}\Big|;\\
\Big|\frac{\mcal{M}}{\mcal{M}\cap\mcal{M}_0}\Big|&=\sum_{a\geq 1}\Big|\frac{\mcal{M}\cap \varepsilon^{-a}\mcal{M}_0}{\mcal{M}\cap\varepsilon^{-a+1}\mcal{M}_0}\Big|=\sum_{a\geq 1}\Big|\frac{\mcal{M}\cap \mcal{M}_a}{\mcal{M}\cap\mcal{M}_{a-1}}\Big|.
\end{split}
\end{equation*}
So we have the following diagram:
\begin{equation}\label{diagram}
\begin{split}
(\mcal{M}_0,\mcal{L}_0)&\longrightarrow(\mcal{M}_1,\mcal{L}_0)
\longrightarrow(\mcal{M}_1,\mcal{L}_1)\longrightarrow
(\mcal{M}_2,\mcal{L}_1)\longrightarrow(\mcal{M}_2,\mcal{L}_2)
\longrightarrow\\
\cdots&\longrightarrow(\mcal{M}_k,\mcal{L}_k)
\longrightarrow(\mcal{M}_{k+1},\mcal{L}_k)
\longrightarrow(\mcal{M}_{k+1},\mcal{L}_{k+1})
\longrightarrow\cdots.
\end{split}
\end{equation}
According to diagram \eqref{diagram} and Proposition \ref{different orbits}, there exists $g \in {\rm O}_{F}(V)\backslash {\rm SO}_{F}(V)$ such that $g(\mcal M_0, \mcal L_0)=(\mcal M, \mcal L)$.

 Conversely, suppose that there exists $g \in {\rm SO}_F(V)$ such that $g(\mcal M_0, \mcal L_0)= (\mcal M, \mcal L)$.
 Let $(\mcal M', \mcal L')$ be a pair in $Z$ such that $(\mcal M, \mcal L), (\mcal M', \mcal L')$ are two pairs defined as in Proposition \ref{different orbits}.
 If
 $$|\mcal M/\mcal M\cap \mcal M_0|+|\mcal L/\mcal L\cap \mcal L_0|\equiv 1\ {\rm mod } \ 2,$$
 then according to Proposition \ref{odd}, we have
 $$|\mcal M' /\mcal M'\cap \mcal M_0|+|\mcal L' /\mcal L'\cap \mcal L_0|\equiv 0 \ {\rm mod} \ 2.$$
 From above results, there exists $g' \in {\rm SO}_F(V)$ such that $g'(\mcal M_0,\mcal L_0)=(\mcal M', \mcal L')$.
  We have $g'g(\mcal M, \mcal L)=(\mcal M', \mcal L')$,
 which is contray to Proposition \ref{different orbits}. We complete the proof.
\end{proof}
\subsection{Complete affine flag varieties of type $D$}
Fix a pair $(\mathcal{M}, \mathcal{L}) \in Z$, we consider the set $\mathcal{Y}_d^{\mathfrak{d}}$ of all collections $\Lambda=(\Lambda_i)_{i\in \mathbb{Z}}$ of lattices in $V$ as following:
\begin{align*}
\begin{split}
\mathcal{Y}_{d}^{\mathfrak{d}}=&\Big\{ \Lambda=(\Lambda)_{i\in \mathbb{Z}} \mid
 \Lambda_i\subset \Lambda_{i+1}, \Lambda_{i}=\varepsilon \Lambda_{i+D}, \ \Lambda_{i}^\ast=\Lambda_{D-i},  \ |\Lambda_i/\Lambda_{i-1}|=1, \\ \ & {\rm for} \  i \in \mathbb{Z}, \
 |\mathcal{L}/\mathcal{L}\cap \Lambda_D|+|\mathcal{M}/\mathcal{M}\cap \Lambda_d|\equiv 0 \ {\rm mod} \ 2 \Big\}.
 \end{split}
\end{align*}
From \eqref{orthogonal equivalent} and Proposition \ref{iff},
 we define an ${\rm SO}_{F}(V)$-action on $\mathcal{Y}_{d}^{\mathfrak{d}}$ by
$g: \Lambda \mapsto g(\Lambda)=\Lambda'$, where $\Lambda'_i=g(\Lambda_i)$ for $i \in \mathbb{Z}$.

\begin{proposition}\label{GB}
The action of ${\rm SO}_{F}(V)$ on $\mathcal{Y}_{d}^{\mathfrak{d}}$ is transitive.
\end{proposition}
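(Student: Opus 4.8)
The plan is to attach to each $\Lambda\in\mcal Y_d^{\mathfrak d}$ a ``standard basis'' adapted to the whole lattice chain, and then to carry one such basis to another by an element of ${\rm SO}_F(V)$. First I would record the structural facts forced by the defining relations. From $\Lambda_0=\varepsilon\Lambda_D$ and $\Lambda_0^\ast=\Lambda_D$ one gets $\Lambda_D=(\varepsilon\Lambda_D)^\ast=\Lambda_D^\sharp$, so $\Lambda_D$ is primitive; similarly $\Lambda_d^\ast=\Lambda_{D-d}=\Lambda_d$, and the chain being increasing gives $\varepsilon\Lambda_D=\Lambda_0\subset\Lambda_d\subset\Lambda_D$, so $(\Lambda_d,\Lambda_D)\in Z$. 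Upon reduction modulo $\varepsilon$ the relations $\Lambda_i\subset\Lambda_{i+1}$, $|\Lambda_i/\Lambda_{i-1}|=1$ and $\Lambda_i^\ast=\Lambda_{D-i}$ say exactly that $(\Lambda_i/\varepsilon\Lambda_D)_{0\le i\le D}$ is a complete self-dual flag in the nondegenerate space $(\Lambda_D/\varepsilon\Lambda_D,\overline Q)$ whose middle term $\Lambda_d/\varepsilon\Lambda_D$ is a maximal isotropic (Lagrangian) subspace. Finally, by Proposition~\ref{iff} the parity condition in the definition of $\mcal Y_d^{\mathfrak d}$ is equivalent to $(\Lambda_d,\Lambda_D)$ lying in the ${\rm SO}_F(V)$-orbit of the fixed pair $(\mcal M,\mcal L)$; hence for any $\Lambda,\Lambda'\in\mcal Y_d^{\mathfrak d}$ I may first apply an element of ${\rm SO}_F(V)$ (which preserves $\mcal Y_d^{\mathfrak d}$) to arrange $\Lambda_d=\Lambda_d'$ and $\Lambda_D=\Lambda_D'$, hence also $\Lambda_0=\varepsilon\Lambda_D=\Lambda_0'$.

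The key lemma I would prove is: for every $\Lambda\in\mcal Y_d^{\mathfrak d}$ there is an $F$-basis $\{v_1,\dots,v_D\}$ of $\Lambda_D$ with $Q(v_i,v_j)=\delta_{i,D+1-j}$ such that $\{v_1,\dots,v_i,\varepsilon v_{i+1},\dots,\varepsilon v_D\}$ is an $\mathfrak o$-basis of $\Lambda_i$ for each $0\le i\le D$, the remaining terms of the chain being recovered via $\Lambda_{i+D}=\varepsilon^{-1}\Lambda_i$. This I would prove in the spirit of Propositions~\ref{perfect basis}--\ref{affine extend}: by the finite-type theory of complete flags in a split orthogonal space (the ``stable subgroup'' discussion of Section~2, cf.\ \cite{W97}) the self-dual flag $(\Lambda_i/\varepsilon\Lambda_D)_i$ admits a basis $\{\bar v_1,\dots,\bar v_D\}$ of $\Lambda_D/\varepsilon\Lambda_D$ with $\overline Q(\bar v_i,\bar v_j)=\delta_{i,D+1-j}$ and $\{\bar v_1,\dots,\bar v_i\}$ a basis of $\Lambda_i/\varepsilon\Lambda_D$; lift arbitrarily to $\{w_1,\dots,w_D\}\subset\Lambda_D$, so that $\{w_1,\dots,w_i,\varepsilon w_{i+1},\dots,\varepsilon w_D\}$ is a basis of $\Lambda_i$ and the Gram matrix $G=(Q(w_i,w_j))$ satisfies $G\equiv J\pmod{\mathfrak m}$; then correct. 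Writing $G=J+\varepsilon H$ with $H$ symmetric over $\mathfrak o$, and using $\mathrm{char}\,\mbb F\neq2$, I would produce by $\varepsilon$-adic successive approximation a matrix $P=I+\varepsilon(\cdots)\in GL_D(\mathfrak o)$ with $PGP^{t}=J$ (the first-order equation $Q_1J+JQ_1^{t}=-H$ is solved by $Q_1=-\tfrac12HJ^{-1}$, and higher orders are corrected inductively), and set $v_i=\sum_jP_{ij}w_j$. Since $P\equiv I\pmod{\mathfrak m}$, each $v_i$ still reduces to $\bar v_i$, so $\{v_1,\dots,v_i,\varepsilon v_{i+1},\dots,\varepsilon v_D\}$ again spans $\Lambda_i$, while now $Q(v_i,v_j)=\delta_{i,D+1-j}$ exactly.

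With the lemma in hand the conclusion is short. Given $\Lambda,\Lambda'\in\mcal Y_d^{\mathfrak d}$, after the reduction above assume $\Lambda_d=\Lambda_d'$, $\Lambda_D=\Lambda_D'$. Choose adapted bases $\{v_i\}$ and $\{v_i'\}$ of the common lattice $\Lambda_D$ for $\Lambda$ and $\Lambda'$, and define $g\in GL_F(V)$ by $gv_i=v_i'$. Then $Q(gv_i,gv_j)=\delta_{i,D+1-j}=Q(v_i,v_j)$ forces $g\in{\rm O}_F(V)$, and the adapted-basis description of the chains gives $g\Lambda_i=\Lambda_i'$ for all $i$, i.e.\ $g\Lambda=\Lambda'$. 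To see $g\in{\rm SO}_F(V)$: the transition matrix between the two $\mathfrak o$-bases $\{v_i\},\{v_i'\}$ of $\Lambda_D$ lies in $GL_D(\mathfrak o)$, so $\det_F g\in\mathfrak o^\times$; and the induced map $\bar g$ on $\Lambda_D/\varepsilon\Lambda_D$ sends the Lagrangian spanned by $\bar v_1,\dots,\bar v_d$ to the one spanned by $\bar v_1',\dots,\bar v_d'$, which is the same Lagrangian $\Lambda_d/\varepsilon\Lambda_D=\Lambda_d'/\varepsilon\Lambda_D'$, whence $\det\bar g=1$ because the stabiliser of a Lagrangian subspace inside an orthogonal group lies in the special orthogonal group. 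As $\det_F g\equiv\det\bar g=1\pmod{\mathfrak m}$, $\det_F g=\pm1$, and $\mathrm{char}\,\mbb F$ is odd, we conclude $\det_F g=1$, as desired.

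The main obstacle I anticipate is the adapted-basis lemma, and within it the correction step upgrading ``Gram matrix $\equiv J\pmod{\mathfrak m}$'' to ``Gram matrix $=J$'' without disturbing the flag: one must keep the correcting change of basis $\equiv I\pmod{\mathfrak m}$, which is exactly what lets the flag survive and, simultaneously, what forces $\det_F g\equiv1\pmod{\mathfrak m}$ in the final step. Everything else---the reduction of the middle pair via Proposition~\ref{iff}, the finite-type adapted-basis input, and the ${\rm SO}$-versus-${\rm O}$ bookkeeping via Lagrangian stabilisers---should be routine.
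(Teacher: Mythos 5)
Your argument is correct, and its skeleton coincides with the paper's: both proofs rest on an adapted-basis lemma for the whole chain (a basis $\{v_1,\dots,v_D\}$ of $\Lambda_D$ with $Q(v_i,v_j)=\delta_{i,D+1-j}$ such that $\{v_1,\dots,v_i,\varepsilon v_{i+1},\dots,\varepsilon v_D\}$ is an $\mathfrak{o}$-basis of $\Lambda_i$), followed by the linear map sending one adapted basis to the other, which lies in ${\rm O}_F(V)$ and carries one chain to the other. You diverge in exactly the two places where the paper is terse. First, the paper invokes Proposition \ref{affine extend} (stated only for a single pair in $Z$) to get the chain-adapted basis; you actually prove the chain version by lifting a finite-type adapted basis of the self-dual flag in $\Lambda_D/\varepsilon\Lambda_D$ and correcting the Gram matrix by $\varepsilon$-adic successive approximation with $P\equiv I \ ({\rm mod}\ \mathfrak{m})$ (your first-order solution $Q_1=-\tfrac12 HJ^{-1}$ is right and uses odd characteristic, and keeping $P\equiv I$ is indeed what preserves the chain), which is a genuine addition. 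Second, for ${\rm SO}$ versus ${\rm O}$: the paper builds $g$ between the two chains directly and concludes $g\in{\rm SO}_F(V)$ from the parity criterion of Proposition \ref{iff}, which implicitly also needs that the ${\rm O}_F(V)$-stabilizer of a pair in $Z$ lies in ${\rm SO}_F(V)$; you instead first use Proposition \ref{iff} to arrange that the two chains share the middle pair $(\Lambda_d,\Lambda_D)$, and then compute $\det g$ by reduction modulo $\varepsilon$, using that the stabilizer of a maximal isotropic subspace lies in the special orthogonal group (this standard fact, which you assert, has a one-line proof: if $gW=W$ with $W=W^{\perp}$ then $V/W\cong W^{*}$ equivariantly, so $\det g=\det(g|_W)\det(g|_{V/W})=1$). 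Your route makes explicit precisely the stabilizer fact the paper leaves implicit, at the cost of the extra reduction step; both arguments are sound, and yours is the more self-contained of the two.
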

\begin{proof}
Let $\Lambda^{j}=(\Lambda_i^j)_{i\in \mathbb{Z}} \in \mathcal{Y}_{d}^{\mathfrak{d}}$ for $j \in \{1,2\}$.
By Proposition \ref{affine extend}, we can find bases $\{v_1^j,\cdots,v_D^j\}$ of $\Lambda_D^j$ as in Proposition \ref{perfect basis} such that $\{v_1^j,\cdots,v_i^j,\varepsilon v_{i+1}^j,\cdots,\varepsilon v_{D}^j\}$ are bases of $\Lambda_i^j$, for $ i \in [1, D]$.
 We define a linear transformation $g$ of $V$ by sending $v_i^1$ to $v_i^2$.
 It is easy to verify that $g\cdot \Lambda^1=\Lambda^2$ and $g \in {\rm O}_{F}(V)$.
  According to Proposition \ref{iff}, there exist $\rho_1, \rho_2 \in {\rm SO}_{ F}(V)$ such that $\rho_1(\mcal{M},\mcal{L})=(\Lambda_d^1, \Lambda_D^1)$ and $\rho_2(\mathcal{M},\mathcal{L})=(\Lambda_d^2,\Lambda_D^2)$.
   Thus $\rho_2\rho_1^{-1}(\Lambda_d^1,\Lambda_D^1)=(\Lambda_d^2, \Lambda_D^2)$. Moreover, we have
  $$|\Lambda_d^1/{\Lambda_d^1\cap \Lambda_d^2}|+|\Lambda_D^1/{\Lambda_D^1\cap \Lambda_D^2}|\equiv 0 \ {\rm mod} \ 2,$$
  which implies that $g \in {\rm SO}_{F}(V)$.
\end{proof}

Given $\Lambda=(\Lambda_i)_{i\in \mathbb{Z}} \in \mathcal{Y}_{d}^{\mathfrak{d}}$,  we consider the stable subgroup $\mathbf{I}_{\Lambda}$ of $\Lambda$.
Let $\{v_1,\cdots,v_D\}$ be a basis of $\Lambda_D$ defined as in Proposition \ref{perfect basis} such that $\{v_1,\cdots, v_i,\varepsilon v_{i+1},\cdots,\varepsilon v_D\}$ is a basis of $\Lambda_i$ for $1\leq i\leq D$. Denote by $\mathbf{I}_{\Lambda_i}$ the subgroups of ${\rm SO}_{ F}(V)$ such that $$\mathbf{I}_{\Lambda_i}=\{g =(g_{rs})_{D\times D}\in {\rm SO}_{ F}(V) \mid g  \Lambda_i=\Lambda_i \}.$$
For the lattice $\Lambda_i$ and $g \in \mathbf{I}_{\Lambda_i}$, we have
$$\sum g_{rs}v_s \in \Lambda_i , \ {\rm for} \ 1\leq r \leq i; \ \   \sum \varepsilon g_{rs} v_s \in \Lambda_i, \ {\rm for} \ i<r\leq D.$$
Thus, we get that
\begin{align*}
g_{rs} \in
\begin{cases}
\mathfrak{m},  &  \mbox{if} \ 1\leq r\leq i,\  i <s\leq D;\\
\mathfrak{m}^{-1}, &  \mbox{if} \ i<r \leq D,\ 1\leq s \leq i;\\
\mathfrak{o},  &  \mbox{otherwise},
\end{cases}
\end{align*}
where $\mathfrak{m}^{-1}$ is the $\mathfrak{o}$-module generated by $\varepsilon^{-1}$.
 Moreover, we have $\mbf{I}_{\Lambda}= \cap_{1\leq i\leq D}\mbf{I}_{\Lambda_i}$, and $\mathbf{I}_{\Lambda}$ is consists of the elements as the following form:
\begin{equation}
\begin{bmatrix}
\mathfrak{o} & \mathfrak{m} & \cdots & \mathfrak{m}  & \mathfrak{m} \\
\mathfrak{o} & \mathfrak{o} & \cdots & \mathfrak{m}  & \mathfrak{m} \\
\vdots & \vdots & \ddots & \vdots & \vdots \\
\mathfrak{o} & \mathfrak{o} & \cdots & \mathfrak{o} & \mathfrak{m} \\
\mathfrak{o} & \mathfrak{o} & \cdots & \mathfrak{o} & \mathfrak{o} \\
\end{bmatrix}.
\end{equation}
Hence $\mathbf{I}_{\Lambda}$ is an Iwahori subgroup and $\mathcal{Y}_{d}^{\mathfrak{d}}  \simeq {\rm SO}_{F}(V)/\mathbf{I}_{\Lambda}$.

\subsection{Partial flag varieties}
Fix an even positive integer
$n=2r$, for some $r \in \mathbb{N}$. Let $\mathcal{X}_{n,d}^{\mathfrak{d}}$ be the set of chains $L=(L_i)_{i\in \mathbb{Z}}$ of lattices in $V$ subject to the following conditions:
\begin{equation}
L_i\subset L_{i+1}, \ \ L_i=\varepsilon L_{i+n}, \ \  L_i^\ast=L_{n-i} \ \ \forall i \in \mathbb{Z}.
\end{equation}
Then ${\rm SO}_{F}(V)$ acts on $\mathcal{X}_{n,d}^{\mathfrak{d}}$ by component-wise action. Denote by $\mathcal{X}_{n,d}^{\mathfrak{d},0}$ and $\mathcal{X}_{n,d}^{\mathfrak{d},1}$ the subsets of $\mathcal{X}_{n,d}^{\mathfrak{d}}$ as following:
 \begin{align*}
  \mathcal{X}_{n,d}^{\mathfrak{d},0}&=\left\{L=(L_i)_{i\in \mathbb{Z}} \in \mathcal{X}_{n,d}^{\mathfrak{d}}\ \big| \ |\mathcal{M}/L_r\cap \mathcal{M}|+|\mathcal{L}/L_n\cap \mathcal{L}|\equiv 0 \ {\rm mod} \ 2 \right\}; \\
   \mathcal{X}_{n,d}^{\mathfrak{d},1}&=\left\{L=(L_i)_{i\in \mathbb{Z}} \in \mathcal{X}_{n,d}^{\mathfrak{d}}\ \big| \ |\mathcal{M}/L_r\cap \mathcal{M}|+|\mathcal{L}/L_n\cap \mathcal{L}| \equiv 1 \ {\rm mod } \ 2 \right\}.
 \end{align*}
 Then $\mcal{X}_{n,d}^{\mathfrak{d}}$ can be decomposed as $\mathcal{X}_{n,d}^{\mathfrak{d}}=\mathcal{X}_{n,d}^{\mathfrak{d},0}\sqcup \mathcal{X}_{n,d}^{\mathfrak{d},1}$.
 We set
\begin{align}\label{abcd}
 \Lambda_{n,d}^{\mathfrak{d}}=\left\{\lambda=(\lambda_i)_{i\in \mathbb{Z}}\in \mathbb{N}^{\mathbb{Z}} \ \Big| \ \lambda_i=\lambda_{1-i}=\lambda_{i+n}, \forall i\in \mathbb{Z}; \sum_{1\leq i\leq n}\lambda_i=D\right\}.
\end{align}
 The set $\mathcal{X}_{n,d}^{\mathfrak{d}}$ admits the following decomposition:
$$\mathcal{X}_{n,d}^\mathfrak{d}=\bigsqcup_{\mathbf{a}=(a_i)\in \Lambda_{n,d}^{\mathfrak{d}}}\mathcal{X}_{n,d}^{\mathfrak{d}}(\mathbf{a}),\ \  \text{where} \ \mathcal{X}_{n,d}^\mathfrak{d}(\mathbf{a})=\left\{L\in \mathcal{X}_{n,d}^\mathfrak{d} \ \big| \ |L_i/L_{i-1}|=a_i, \forall i \in \mathbb{Z}\right\}.$$

 For the partial flag varieties, we have the counterpart of Proposition \ref{GB}.
\begin{corollary}\label{chen}
Assume $L=(L_i)_{i\in \mathbb{Z}}, L'=(L_i')_{i\in \mathbb{Z}} \in \mathcal{X}_{n,d}^{\mathfrak{d},0} \cap \mathcal{X}_{n,d}^{\mathfrak{d}}(\mathbf{a})$ (resp. $\mathcal{X}_{n,d}^{\mathfrak{d},1}\cap \mathcal{X}_{n,d}^{\mathfrak{d}}(\mathbf{a})$)
for some $\mathbf{a}=(a_i)_{i\in \mathbb{Z}} \in  \Lambda_{n,d}^{\mathfrak{d}}$.
Then there exists $g \in {\rm SO}_{F}(V)$ such that $ g\cdot L=L'$.
\end{corollary}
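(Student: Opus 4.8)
The plan is to reduce Corollary~\ref{chen} to the transitivity of ${\rm SO}_F(V)$ on complete affine flag varieties (Proposition~\ref{GB}), by refining a partial flag to a complete one along an insertion pattern that depends only on $\mathbf{a}$.

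First I would record the combinatorics. Given $\mathbf{a}=(a_i)\in\Lambda_{n,d}^{\mathfrak{d}}$, set $b_i=\sum_{1\le k\le i}a_k$ for $0\le i\le n$; the relations $a_i=a_{1-i}=a_{i+n}$ defining $\Lambda_{n,d}^{\mathfrak{d}}$ give $b_0=0$, $b_n=D$, $b_r=d$ and $b_{n-i}=D-b_i$. Now let $L=(L_i)_{i\in\mathbb{Z}}\in\mathcal{X}_{n,d}^{\mathfrak{d}}(\mathbf{a})$. I first observe that $(L_r,L_n)\in Z$: from $L_n^\ast=L_0=\varepsilon L_n$ and $\mathcal{L}^\ast=\varepsilon\mathcal{L}^\sharp$ we get $L_n^\sharp=L_n$, while $L_r^\ast=L_{n-r}=L_r$ and $\varepsilon L_n\subset L_r\subset L_n$. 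Hence, by Proposition~\ref{affine extend} followed by the block-wise refinement over $\mathbb{F}$ carried out in the proof of Proposition~\ref{GB}, there is a basis $\{v_1,\ldots,v_D\}$ of $L_n$ with $Q(v_i,v_j)=\delta_{i,D+1-j}$ such that $\{v_1,\ldots,v_{b_i},\varepsilon v_{b_i+1},\ldots,\varepsilon v_D\}$ is an $\mathfrak{o}$-basis of $L_i$ for $0\le i\le r$. Using this basis I would define $\Lambda_j:=\langle v_1,\ldots,v_j,\varepsilon v_{j+1},\ldots,\varepsilon v_D\rangle_{\mathfrak{o}}$ for $0\le j\le D$ and $\Lambda_{j+D}:=\varepsilon^{-1}\Lambda_j$ for all $j$. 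From $Q(v_i,v_j)=\delta_{i,D+1-j}$ one checks directly (using \eqref{eqsharp}, $(\mathcal{L}^\sharp)^\sharp=\mathcal{L}$ and $\mathcal{L}^\ast=\varepsilon\mathcal{L}^\sharp$) that $\Lambda:=(\Lambda_j)_{j\in\mathbb{Z}}$ satisfies $\Lambda_j\subset\Lambda_{j+1}$, $|\Lambda_j/\Lambda_{j-1}|=1$, $\Lambda_j=\varepsilon\Lambda_{j+D}$ and $\Lambda_j^\ast=\Lambda_{D-j}$; moreover $\Lambda_{b_i}=L_i$ for $0\le i\le r$ by construction, hence for all $i\in\mathbb{Z}$ after using $\Lambda_j^\ast=\Lambda_{D-j}$, $L_i^\ast=L_{n-i}$ and periodicity. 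Since $\Lambda_d=L_r$ and $\Lambda_D=L_n$, the congruence $|\mathcal{M}/L_r\cap\mathcal{M}|+|\mathcal{L}/L_n\cap\mathcal{L}|\equiv 0$ (resp. $\equiv 1$) defining $\mathcal{X}_{n,d}^{\mathfrak{d},0}$ (resp. $\mathcal{X}_{n,d}^{\mathfrak{d},1}$) is exactly the congruence $|\mathcal{L}/\mathcal{L}\cap\Lambda_D|+|\mathcal{M}/\mathcal{M}\cap\Lambda_d|\equiv 0$ (resp. $\equiv 1$); so $\Lambda\in\mathcal{Y}_d^{\mathfrak{d}}$ when $L\in\mathcal{X}_{n,d}^{\mathfrak{d},0}$, and $\Lambda$ lies in the analogous variety of complete flags of the opposite parity when $L\in\mathcal{X}_{n,d}^{\mathfrak{d},1}$.

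With this refinement in hand the corollary is immediate. Given $L,L'\in\mathcal{X}_{n,d}^{\mathfrak{d},0}\cap\mathcal{X}_{n,d}^{\mathfrak{d}}(\mathbf{a})$, refine both — with the same $\mathbf{a}$, hence the same indices $b_i$ — to $\Lambda,\Lambda'\in\mathcal{Y}_d^{\mathfrak{d}}$ with $\Lambda_{b_i}=L_i$ and $\Lambda'_{b_i}=L'_i$. By Proposition~\ref{GB} there is $g\in{\rm SO}_F(V)$ with $g\Lambda=\Lambda'$; reading off the terms at the indices $b_i$ gives $gL_i=L'_i$ for all $i$, i.e. $g\cdot L=L'$. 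For $L,L'\in\mathcal{X}_{n,d}^{\mathfrak{d},1}\cap\mathcal{X}_{n,d}^{\mathfrak{d}}(\mathbf{a})$ the refinements lie in the opposite-parity variety of complete flags, on which ${\rm SO}_F(V)$ is again transitive by the argument of Proposition~\ref{GB}: the only change is that Propositions~\ref{iff} and \ref{different orbits} (so that $Z$ has exactly two ${\rm SO}_F(V)$-orbits) are invoked to see that the pairs $(\Lambda_d,\Lambda_D)$ and $(\Lambda'_d,\Lambda'_D)$ still lie in one common ${\rm SO}_F(V)$-orbit, after which the same conclusion follows.

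The part I expect to be the main obstacle is the construction in the second paragraph: producing the perfect basis of $L_n$ simultaneously adapted to the whole chain, verifying the four defining conditions of $\mathcal{Y}_d^{\mathfrak{d}}$ for $\Lambda$, and — the point that makes the reduction work — confirming that the inserted indices $b_i$ depend only on $\mathbf{a}$, so that an element of ${\rm SO}_F(V)$ matching the two complete refinements automatically matches $L$ and $L'$. The parity bookkeeping $\Lambda_d=L_r$, $\Lambda_D=L_n$ and the transfer of Proposition~\ref{GB} to the opposite-parity component are then routine.
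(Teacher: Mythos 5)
Your proposal is correct and follows essentially the route the paper intends: the corollary is stated as the counterpart of Proposition \ref{GB}, and your refinement of a partial chain to a complete one via a simultaneous hyperbolic adapted basis (Proposition \ref{affine extend} plus the finite-type refinement), with the observation that $\Lambda_d=L_r$, $\Lambda_D=L_n$ and that the insertion indices $b_i$ depend only on $\mathbf{a}$, is just a repackaging of the basis-matching argument in the proof of Proposition \ref{GB}. Your treatment of the odd-parity component---invoking Propositions \ref{different orbits} and \ref{iff} to see that $Z$ has exactly two ${\rm SO}_F(V)$-orbits, so the same argument applies on the opposite-parity complete flag variety---is exactly the extra point needed and is handled correctly.
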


 Let $L=(L_i)_{i\in \mathbb{Z}} \in \mathcal{X}_{n,d}^{\mathfrak{d}}(\mathbf{a})$ for some $\mathbf{a}=(a_i)_{i \in \mathbb{Z}} \in \Lambda_{n,d}^{\mathfrak{d}}$.
 Denote by $G_{P(\mathbf{a})}$ the stabilizer of $L$ under the basis defined as in Proposition \ref{perfect basis}.
 By the same argument as for the complete case, each element of $G_{P(\mathbf{a})}$ is of the form as following:
\begin{equation}
\begin{bmatrix}
B_1 & \mathfrak{m} & \cdots & \mathfrak{m}  & \mathfrak{m} \\
\mathfrak{o} & B_2 & \cdots & \mathfrak{m}  & \mathfrak{m} \\
\vdots & \vdots & \ddots & \vdots & \vdots\\
\mathfrak{o} & \mathfrak{o} & \cdots & B_{n-1} & \mathfrak{m} \\
\mathfrak{o} & \mathfrak{o} & \cdots & \mathfrak{o} & B_n \\
\end{bmatrix},
\end{equation}
where $B_i$ is the block of type $a_{i}\times a_{i}$ with all entries in $\mathfrak{o}$ for $ i \in [1,n]$.
 Moreover, $G_{P(\mathbf{a})}$ is a parahoric subgroup. Then we have
 \begin{equation*}
 \mathcal{X}_{n,d}^{\mathfrak{d},0} =\bigsqcup_{\mathbf{a}\in \Lambda_{n,d}^{\mathfrak{d}}}\left\{L=(L_i)_{i\in \mathbb{Z}} \in \mathcal{X}_{n,d}^{\mathfrak{d},0} \ \big| \ L \in \mathcal{X}_{n,d}(\mathbf{a}) \right\}=\bigsqcup_{\mathbf{a}\in \Lambda_{n,d}^{\mathfrak{d}}}{\rm SO}_{ F}(V)/G_{P(\mathbf{a})},
 \end{equation*}
 and $\mathcal{X}_{n,d}^{\mathfrak{d},0}\simeq \mathcal{X}_{n,d}^{\mathfrak{d},1}\simeq \bigsqcup_{\mathbf{a}\in \Lambda_{n,d}^{\mathfrak{d}}}{\rm SO}_{F}(V)/G_{P(\mathbf{a})}$.
 \section{Geometric realization of the affine Hecke algebra}
 In this section, we study the convolution algebra $\mcal{H}_{d}^{\mathfrak{d}}$ of pairs of complete flags of affine type $D$.
 We present multiplication formulas in $\mcal{H}_{d}^{\mathfrak{d}}$ with generators, and prove $\mcal{H}_d^{\mathfrak{d}}$ is isomorphic to the (extended) affine Hecke algebra of type $D$.
 \subsection{Parametrizing ${\rm SO}_{F}(V)$-orbits on $\mathcal{Y}_{d}^{\mathfrak{d}}\times \mathcal{Y}_{d}^{\mathfrak{d}}$}

 Let $\Sigma_d$ be the set of matrices with
entries being non-negative integer as following:

\begin{align*}
\Sigma_d=&\bigg\{\sigma \in  {\rm Mat}_{\mathbb{Z}\times\mathbb{Z}}(\mathbb{N}) \ \bigg| \ \sigma_{ij}=\sigma_{1-i,1-j}=
\sigma_{i+D,j+D}, \ \forall i,j \in \mathbb{Z}, \ \sum_{i}\sigma_{ij}=\sum_{j}\sigma_{ij}=1,\\
&\sum_{i\leq 0<j}\sigma_{ij}+\sum_{i\leq d<j}\sigma_{ij}\equiv 0 \ {\rm mod} \ 2 \bigg\}.
\end{align*}
Let ${\rm SO}_{F}(V)$ act diagonally on the product $\mathcal{Y}_{d}^{\mathfrak{d}} \times \mathcal{Y}_{d}^{\mathfrak{d}}$.
Thanks to the condition $$|\mathcal{L}/\mathcal{L}\cap \Lambda_D|+|\mathcal{M}/\mathcal{M}\cap \Lambda_d|\equiv 0 \ {\rm mod} \ 2,$$
 we may define a map $\Phi$ from the set of ${\rm SO}_{F}(V)$-orbits in $\mathcal{Y}_{d}^{\mathfrak{d}}\times\mathcal{Y}_{d}^{\mathfrak{d}}$ to $\Sigma_d$, by sending the orbit ${\rm SO}_{F}(V)\cdot(\Lambda,\Lambda')$ to $\sigma=(\sigma_{ij})_{i,j\in \mathbb{Z}}$, where
$$\sigma_{ij}=\Big|\frac{\Lambda_i+\Lambda_i\cap \Lambda_j'}{\Lambda_{i-1}+\Lambda_i\cap \Lambda_{j-1}'}\Big|.$$
By the definition of $\sigma_{ij}$, we have
\begin{equation*}
\begin{split}
\sigma_{1-i,1-j}&=\Big|\frac{\Lambda_{-i}+\Lambda_{1-i}\cap \Lambda_{1-j}'}{\Lambda_{-i}+\Lambda_{1-i}\cap \Lambda_{-j}'}\Big|=\Big|\frac{\Lambda_{D-i}+\Lambda_{D+1-i}\cap \Lambda_{D+1-j}'}{\Lambda_{D-i}+\Lambda_{D+1-i}\cap \Lambda_{D-j}'}\Big|\\
&=\Big|\frac{(\Lambda_{i}\cap (\Lambda_{i-1}+ \Lambda_{j-1}'))^\ast}{(\Lambda_{i}\cap (\Lambda_{i-1}+ \Lambda_{j}'))^\ast}\Big|=\Big|\frac{\Lambda_{i-1}+\Lambda_{i}\cap \Lambda_{j}'}{\Lambda_{i-1}+\Lambda_{i}\cap \Lambda_{j-1}'}\Big|=\sigma_{ij}, \ \ {\rm for} \ i,j \in \mbb Z.
\end{split}
\end{equation*}

By a similar argument as for \cite[Proposition 3.1.2]{FLLLW20} and \cite[Proposition 2.6]{H99}, we have the following proposition.

\begin{proposition}\label{fenjie}
 Let $\sigma=(\sigma_{ij})_{i,j \in \mathbb{Z}}$ be the associated matrix of $(\Lambda,\Lambda')$ under the map $\Phi$.
 Then we can decompose $V$ into $V=\oplus_{i,j \in \mathbb{Z}}V_{ij}$ as ${\mbb F}$-vector spaces satisfying that $|V_{ij} \mid =\sigma_{ij}$,
 \begin{equation*}
 \Lambda_i=\bigoplus_{k,l\in \mathbb{Z},k\leq i}V_{kl}, \ \  \Lambda_j'=\bigoplus_{k,l\in \mathbb{Z},l\leq j}V_{kl}, \ \ \  \forall i,j \in \mathbb{Z}.
 \end{equation*}
Moreover, there exists a basis $\{x_{ij}^m|1\leq m\leq a_{ij}\}$ of $V_{ij}$ such that
\begin{equation}\label{decomposition}
\begin{split}
x_{i,j}^m&=\varepsilon x_{i+D,j+D}^m, \ \forall i,j \in \mathbb{Z},\ 1\leq m\leq \sigma_{ij},\\
Q(x_{ij}^m,x_{kl}^{m'})&=Q(x_{kl}^{m'},x_{ij}^m), \ \forall i,j,k,l \in \mathbb{Z},1\leq m \leq \sigma_{ij},1\leq m' \leq \sigma_{kl},\\
Q(x_{ij}^m,x_{kl}^{m'})&=\varepsilon Q(x_{ij}^{m},x_{k+D,j+D}^{m'}), \ \forall i,j,k,l \in \mathbb{Z},1\leq m \leq \sigma_{ij},1\leq m' \leq \sigma_{kl},\\
Q(x_{ij}^m,x_{kl}^{m'})&=\delta_{m,m'}, \ \forall 1\leq i,k\leq D,i+k=D+1,j+l=D+1.
\end{split}
\end{equation}
\end{proposition}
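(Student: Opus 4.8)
The plan is to build the decomposition $V=\bigoplus_{i,j\in\mathbb Z}V_{ij}$ in two stages: first produce the $\mathbb F$-vector space splitting refining both flags, and then upgrade it to an $\mathfrak o$-adapted, $Q$-adapted basis satisfying \eqref{decomposition}. For the first stage, recall that $\sigma_{ij}=\big|(\Lambda_i+\Lambda_i\cap\Lambda_j')/(\Lambda_{i-1}+\Lambda_i\cap\Lambda_{j-1}')\big|$ measures the ``jump'' of $\Lambda'$ inside the graded piece $\Lambda_i/\Lambda_{i-1}$. Working modulo $\varepsilon$-periodicity (so that only finitely many indices are relevant in each residue class), I would choose, for each $(i,j)$, a subspace $V_{ij}$ of $V$ whose image in $\Lambda_i/\Lambda_{i-1}$ is a complement to $\Lambda_{i-1}+\Lambda_i\cap\Lambda_{j-1}'$ inside $\Lambda_{i-1}+\Lambda_i\cap\Lambda_j'$; a counting argument (the two flags are complete, so each graded piece is one-dimensional and the double-flag filtration of each graded piece is exhausted by the $\sigma_{ij}$) shows these fit together to give $\Lambda_i=\bigoplus_{k\le i}V_{kl}$ and, by the symmetric computation on the other side, $\Lambda_j'=\bigoplus_{l\le j}V_{kl}$. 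This is exactly the affine-periodic analogue of \cite[Proposition 3.1.2]{FLLLW20}, and I would cite that argument rather than repeat it, noting only that the $D$-periodicity $\sigma_{i+D,j+D}=\sigma_{ij}$ lets us insist $V_{i+D,j+D}=\varepsilon^{-1}V_{ij}$, which gives the first line of \eqref{decomposition}.

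For the second stage, I would pair up the blocks under the duality $\Lambda_i^\ast=\Lambda_{D-i}$, $(\Lambda_j')^\ast=\Lambda_{D-j}'$. The form $Q$ puts $V_{ij}$ in duality with the ``opposite'' block, and the already-established symmetry $\sigma_{ij}=\sigma_{1-i,1-j}$ (proved just above the statement) together with $\Lambda^\ast_i=\Lambda_{D-i}$ forces the relevant block to be $V_{D+1-i,\,D+1-j}$ when $1\le i\le D$; for $i$ outside $[1,D]$ the pairing is the $\varepsilon$-scaled version of one of these, giving the third line of \eqref{decomposition}. On the ``diagonal'' blocks $V_{D+1-i,D+1-j}$ with $1\le i,k\le D$, $i+k=D+1$, $j+l=D+1$, the form restricts to a perfect $\mathbb F$-pairing between $V_{ij}$ and $V_{kl}$; choosing dual bases $\{x_{ij}^m\}$, $\{x_{kl}^m\}$ yields $Q(x_{ij}^m,x_{kl}^{m'})=\delta_{m,m'}$ (fourth line), after which symmetry of $Q$ (second line) is automatic. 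Transporting by the periodicity relation $x_{i,j}^m=\varepsilon x_{i+D,j+D}^m$ then defines the basis on all blocks and forces the first and third lines; one checks that $Q$ between non-paired blocks takes values in $\mathfrak o$ (indeed in $\mathfrak m$ off the anti-diagonal), so the basis is as claimed.

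The main obstacle I expect is not the combinatorics of the splitting — that is a routine bookkeeping exercise modeled on the finite and affine type $C$ cases — but the compatibility of the $Q$-orthogonalization with the $\mathfrak o$-lattice structure: I must ensure that when I replace a chosen basis of $V_{ij}$ by a ``dual'' basis with respect to $Q$, the change-of-basis matrix has entries in $\mathfrak o$ with unit determinant (equivalently, that the Gram matrix of the natural pairing between paired blocks is already a unit matrix over $\mathfrak o$, not merely over $F$). This is where the hypotheses $\Lambda_D=\Lambda_D^\sharp$ and $\Lambda_d^\ast=\Lambda_{D-d}$ enter crucially: primitivity of $\Lambda_D$ (Proposition \ref{dual property} and Proposition \ref{perfect basis}) guarantees a basis with $Q(v_i,v_j)=\delta_{i,D+1-j}$, and restricting such a basis to the blocks shows the Gram pairing on each dual pair of blocks is unimodular over $\mathfrak o$. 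Once this integrality is secured, iterating the Gram–Schmidt-type correction (choosing the correction terms in $\mathfrak m$, exactly as in the proof of Proposition \ref{perfect basis}) converges $\varepsilon$-adically and produces the required basis. I would therefore organize the write-up so that the integrality claim is isolated as the one substantive point, with everything else delegated to the cited analogues \cite[Proposition 3.1.2]{FLLLW20} and \cite[Proposition 2.6]{H99}.
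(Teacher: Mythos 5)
Your proposal is essentially the paper's own approach: the paper gives no argument for Proposition \ref{fenjie} beyond asserting that it follows by an argument similar to \cite[Proposition 3.1.2]{FLLLW20} and \cite[Proposition 2.6]{H99}, and the two-stage adaptation you outline (a bi-adapted, $\varepsilon$-periodic splitting, followed by normalization of the duality pairing using self-duality of the chains and primitivity of $\Lambda_D$) is precisely the adaptation being invoked, in more detail than the paper itself supplies. The only point to tighten in a full write-up is the correction step: unimodularity of the block pairings comes from the chain self-duality $\Lambda_{i-1}^{\ast}=\Lambda_{D+1-i}$ (not from a hyperbolic basis of $\Lambda_D$ restricted to the blocks), and the Gram--Schmidt corrections must be chosen inside intersections such as $\Lambda_{i-1}\cap\Lambda_{j-1}'$ so as to preserve adaptedness to both chains, rather than merely ``in $\mathfrak m$'' as in the single-lattice Proposition \ref{perfect basis}.
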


From proposition \ref{decomposition}, we have the Iwahori-Bruhat decomposition for the group
${\rm SO}_{F}(V)$.

\begin{proposition} \label{Iso}
 The map $\Phi: {\rm SO}_{F}(V)\backslash \mathcal{Y}_{d}^{\mathfrak{d}} \times \mathcal{Y}_{d}^{\mathfrak{d}} \rightarrow \Sigma_{d}$ is a bijection.
\end{proposition}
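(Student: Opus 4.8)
The plan is to show that $\Phi$ is well-defined, injective, and surjective. Well-definedness amounts to checking that $\sigma = \Phi(\mathrm{SO}_F(V)\cdot(\Lambda,\Lambda'))$ actually lies in $\Sigma_d$: the periodicity $\sigma_{i+D,j+D}=\sigma_{ij}$ follows from $\Lambda_{i+D}=\varepsilon^{-1}\Lambda_i$, the symmetry $\sigma_{1-i,1-j}=\sigma_{ij}$ is the computation already displayed in the excerpt using $\Lambda_i^\ast=\Lambda_{D-i}$ together with the duality identities \eqref{eqsharp}, and the row/column sums equal $1$ because $|\Lambda_i/\Lambda_{i-1}|=1$ (so exactly one $\sigma_{ij}$ in row $i$ is nonzero, equal to $1$, and similarly for columns). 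The parity condition $\sum_{i\le 0<j}\sigma_{ij}+\sum_{i\le d<j}\sigma_{ij}\equiv 0\ \mathrm{mod}\ 2$ is the crucial membership constraint: I would identify $\sum_{i\le 0<j}\sigma_{ij}$ with $|\Lambda_0'/\Lambda_0\cap\Lambda_0'| = |\mcal L/\mcal L\cap\Lambda_D'|+ (\text{contribution from }\Lambda)$ — more precisely, telescoping the defining quotients shows $\sum_{i\le 0<j}\sigma_{ij}=|\Lambda_D/\Lambda_D\cap\Lambda_0'|$ up to reindexing, and likewise $\sum_{i\le d<j}\sigma_{ij}=|\Lambda_d$-vs-$\Lambda_d'|$ type codimension; the defining congruences on $\mcal Y_d^{\mathfrak d}$ for both $\Lambda$ and $\Lambda'$ then force the sum to be even by Proposition \ref{iff}. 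That $\sigma$ is independent of the representative $(\Lambda,\Lambda')$ in its orbit is immediate since $g\in\mathrm{SO}_F(V)$ preserves all the quotients defining $\sigma_{ij}$.

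For surjectivity, given $\sigma\in\Sigma_d$ I would build $V=\oplus V_{ij}$ with a basis as in \eqref{decomposition} directly from the combinatorics of $\sigma$ — since each row and column sum is $1$, the support of $\sigma$ is the graph of a bijection of $\Z$ commuting with $i\mapsto i+D$ and $i\mapsto 1-i$, so one can lay down one basis vector per nonzero entry, pair them off via the form $Q$ using the anti-diagonal normalization $Q(x_{ij}^m,x_{kl}^{m'})=\delta_{m,m'}$ when $i+k=j+l=D+1$, and extend $\varepsilon$-equivariantly. Defining $\Lambda_i=\oplus_{k\le i}V_{kl}$ and $\Lambda_j'=\oplus_{l\le j}V_{kl}$ then produces a pair in $\mcal Y_d^{\mathfrak d}\times\mcal Y_d^{\mathfrak d}$ — here the parity clause in $\Sigma_d$ is exactly what guarantees the congruence conditions in the definition of $\mcal Y_d^{\mathfrak d}$ hold, using Proposition \ref{iff} to move between parity of codimensions and the $\mathrm{SO}$-orbit condition — whose image under $\Phi$ is $\sigma$ by construction. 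One must also check $(V,Q)$ with this form is isometric to the standard $(F^D,J)$, which follows from Proposition \ref{perfect basis} applied to the primitive lattice $\Lambda_0^\sharp$ (or directly from Witt's theorem over $F$), so that the constructed flags live in the correct space and $\mathrm{SO}_F(V)$ acts.

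For injectivity, suppose $\Phi(\mathrm{SO}_F(V)\cdot(\Lambda,\Lambda'))=\Phi(\mathrm{SO}_F(V)\cdot(\tilde\Lambda,\tilde\Lambda'))=\sigma$. Apply Proposition \ref{fenjie} to each pair to get decompositions $V=\oplus V_{ij}=\oplus\tilde V_{ij}$ with bases $\{x_{ij}^m\}$, $\{\tilde x_{ij}^m\}$ satisfying \eqref{decomposition}; the linear map $g$ sending $x_{ij}^m\mapsto\tilde x_{ij}^m$ is then an isometry of $(V,Q)$ (the form values agree entry-by-entry by \eqref{decomposition}) carrying $(\Lambda,\Lambda')$ to $(\tilde\Lambda,\tilde\Lambda')$, so $g\in\mathrm O_F(V)$, and a determinant/parity argument — exactly as in Proposition \ref{GB}, using that both pairs satisfy the congruence defining $\mcal Y_d^{\mathfrak d}$ so that $|\Lambda_d/\Lambda_d\cap\tilde\Lambda_d|+|\Lambda_D/\Lambda_D\cap\tilde\Lambda_D|$ is even and Proposition \ref{iff} applies — upgrades $g$ to an element of $\mathrm{SO}_F(V)$. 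The main obstacle throughout is the careful bookkeeping of the two parity conditions: one has to consistently translate between codimensions of lattices, the even/odd clause cutting out $\Sigma_d$ and $\mcal Y_d^{\mathfrak d}$, and the two $\mathrm{SO}_F(V)$-orbits on $Z$ identified in Propositions \ref{odd}–\ref{iff}, making sure these all match up so that $\Phi$ lands in $\Sigma_d$ and that isometries can be corrected inside the special orthogonal group rather than merely the orthogonal group.
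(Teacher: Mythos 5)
Your proposal is correct and takes essentially the same route as the paper: injectivity by applying Proposition \ref{fenjie} to both pairs, matching the bases of \eqref{decomposition} to get an isometry in ${\rm O}_F(V)$, and then using the parity conditions defining $\mathcal{Y}_d^{\mathfrak{d}}$ together with Proposition \ref{iff} to upgrade it to ${\rm SO}_F(V)$, with surjectivity coming from the decomposition of Proposition \ref{fenjie}. The only difference is that you spell out the well-definedness of $\Phi$ and the explicit construction of a pair realizing a given $\sigma$, which the paper leaves implicit ("clearly surjective").
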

\begin{proof}
By Proposition \ref{fenjie},  $\Phi$ is clearly surjective.
Assume that there exist two pairs $(\Lambda,\Lambda'), (\widetilde{\Lambda}, \widetilde{\Lambda}')$ of lattice chains in $\mcal{Y}_d^{\mathfrak{d}}$ such that
 $\phi(\Lambda, \Lambda')=\Phi(\widetilde{L}, \widetilde{L}')=\sigma$.
We can find bases $\{x_{ij}^m\}$ and $\{y_{ij}^m\}$ for the pairs $(\Lambda, \Lambda')$ and $(\widetilde{\Lambda}, \widetilde{\Lambda'})$, respectively, defined as in Proposition \ref{fenjie}.
We define a linear transformation $g: V\rightarrow V$ by sending $x_{ij}^m$ to $y_{ij}^m$ for $ i,j\in \mathbb{Z}, 1\leq m\leq \sigma_{ij}$.
Then we have $g \in {\rm O}_F(V)$ and $g\cdot (\Lambda,\Lambda')=(\widetilde{\Lambda},\widetilde{\Lambda'})$.
Moreover, since $\Lambda$ and $ \Lambda'$ belong to $ \mathcal{Y}_{d}^{\mathfrak{d}}$, we get $g \in {\rm SO}_{F}(V)$ from Proposition \ref{iff}. We complete the proof.
\end{proof}
\subsection{Convolution algebra}
Let $v$ be an indeterminate and $\mathcal{A}=\mathbb{Z}[v,v^{-1}]$.
We define
$$\mathcal{H}_{d;\mathcal{A}}^{\mathfrak{d}}=\mathcal{A}_{{\rm SO}_{ F}(V)}(\mathcal{Y}_{d}^{\mathfrak{d}}\times \mathcal{Y}_{d}^{\mathfrak{d}})$$
to be the space of ${\rm SO}_{F}(V)$-invariant $\mathcal{A}$-valued functions
on $\mathcal{Y}_{d}^{\mathfrak{d}}\times \mathcal{Y}_{d}^{\mathfrak{d}}$.
For $\sigma \in \Sigma_{d}$,
we denote by $[\sigma]$ the characteristic function of the corresponding orbit $\mathcal{O}_{\sigma}$.
 Then $\mathcal{H}_{d;\mathcal{A}}^{\mathfrak{d}}$ is a free $\mathcal{A}$-module with a basis $\{[\sigma] \mid \sigma \in \Sigma_{d}\}$.
We define a (generic) convolution product $\ast$ on $\mathcal{H}_{d;\mathcal{A}}^{\mathfrak{d}}$ as follows.
For a tripe of matrices $(\sigma,\sigma',\sigma'')\in \Sigma_{d}\times \Sigma_{d}\times \Sigma_{d}$,
we choose $(\Lambda,\Lambda'')\in \mathcal{O}_{\sigma''}$, and let $g_{\sigma,\sigma',\sigma'';q}$ be the number of $\Lambda' \in \mathcal{Y}_{d}^{\mathfrak{d}}$ such that $(\Lambda,\Lambda') \in \mathcal{O}_{\sigma}$ and $(\Lambda',\Lambda'') \in \mathcal{O}_{\sigma'}$.
Then there exists a polynomial $g_{\sigma,\sigma',\sigma''} \in \mathbb{Z}[v,v^{-1}]$ such that $g_{\sigma,\sigma',\sigma'';q}=g_{\sigma,\sigma',\sigma''}|_{v=\sqrt{q}}$ for every odd prime power $q$.
We define the convolution product on $\mathcal{H}_{d;\mathcal{A}}^{\mathfrak{d}}$ by letting
$$[\sigma] \ast [\sigma']=\sum_{\sigma''}g_{\sigma,\sigma',\sigma''}[\sigma''].$$
Equipped with the convolution product, the $\mathcal{A}$-module $\mathcal{H}_{d;\mathcal{A}}^{\mathfrak{d}}$ becomes an associative $\mathcal{A}$-algebra.
We set that
$$\mathcal{H}_{d}^{\mathfrak{d}}=\mathbb{Q}(v)\otimes_{\mathcal{A}}\mathcal{H}_{d;\mathcal{A}}^{\mathfrak{d}}.$$

We shall provide an explicit description of the multiplication formulas of $\mathcal{H}_{d}^{\mathfrak{d}}$.
For any $1\leq j\leq d-1$,
define the characteristic function $[T_j]$ in $\mathcal{H}_{d;\mathcal{A}}^{\mathfrak{d}}$ by
\begin{align*}
[T_j](\Lambda,\Lambda')&=\left\{\begin{array}{ll}
   1, & {\rm if}\ \Lambda_i=\Lambda'_i, \forall i \in [0,d]\backslash \{j\}, \Lambda_j\neq \Lambda'_j;\\
   0, &{\rm otherwise}.\ \\
 \end{array}
 \right. \\
[T_0](\Lambda,\Lambda')&=\left\{\begin{array}{ll}
   1, & {\rm if}\ \Lambda_i=\Lambda'_i, \forall i \in [2,d] , \Lambda_{0}\neq \Lambda'_{0}, \Lambda_{1}\neq \Lambda'_{1}, \Lambda_{-1} \subset \Lambda'_{1};\\
   0, &{\rm otherwise}.\ \\
 \end{array}
 \right. \\
[T_d](\Lambda,\Lambda')&=\left\{\begin{array}{ll}
   1, & {\rm if}\ \Lambda_i=\Lambda'_i, \forall i \in [0,d-2] , \Lambda_{d-1}\neq \Lambda'_{d-1}, \Lambda_{d}\neq \Lambda'_{d}, \Lambda_{d-1} \subset \Lambda'_{d+1};\\
   0, &{\rm otherwise}.\ \\
 \end{array}
 \right. \\
 [T_{\rho}](\Lambda,\Lambda')&=\left\{\begin{array}{ll}
   1, & {\rm if}\ \Lambda_i=\Lambda'_i, \forall i \in [1,d-1] , \Lambda_{0}\neq \Lambda'_{0}, \Lambda_{d}\neq \Lambda'_{d};\\
   0, &{\rm otherwise}.\ \\
 \end{array}
 \right.
 \end{align*}

 For $i,j \in \mathbb{Z}$, let $E^{ij}$ be the $\mathbb{Z}\times \mathbb{Z}$ matrix whose $(k,l)$-th entries are $1$, for all $(k,l)\equiv(i,j) \ {\rm mod} \ n$, and $0$ otherwise.
We set
$$E_\theta^{ij}=E^{ij}+E^{1-i,1-j}.$$
Moreover, we define a function on $\mbb Z \times \mbb Z$ by
\begin{align*}
\xi(x,y)=\left\{\begin{array}{ll}
   2, & {\rm if}\ x>y;\\
   0, &{\rm oterwise }. \\
 \end{array}
 \right.
\end{align*}

By a similar argument to \cite[Proposition 3.5]{Lu99} and \cite[Lemma 4.3.1]{FLLLW20}, we get the following lemma.

\begin{lemma}\label{multiplication}
Assume that $h \in [1,d-1]$. Let $\sigma=(\sigma_{ij})_{i,j \in \mbb Z} \in \Sigma_d$.

$(a)$ Assume that $\sigma_{h,k}=\sigma_{h+1,l}=1$. Then
\begin{equation}
[T_h]\ast [\sigma]=v^{\xi(k,l)}[\sigma-E_{\theta}^{h,k}-E_{\theta}^{h+1,l}
+E_{\theta}^{h,l}+E_{\theta}^{h+1,k}]+(v^{\xi(k,l)}-1)[\sigma].
\end{equation}

$(b)$ Assume that $\sigma_{-1,k}=\sigma_{1,l}=1$. Then
\begin{equation}
[T_0]\ast [\sigma]=v^{\xi(k,l)}[\sigma-E_{\theta}^{-1,k}-E_{\theta}^{1,l}
+E_{\theta}^{-1,l}+E_\theta^{1,l}]+(v^{\xi(k,l)}-1)[\sigma].
\end{equation}

$(c)$ Assume that $\sigma_{d-1,k}=\sigma_{d+1,l}=1$. Then
\begin{equation}
[T_d]\ast [\sigma]=v^{\xi(k,l)}[\sigma-E_{\theta}^{d-1,k}-E_{\theta}^{d+1,l}
+E_{\theta}^{d-1,l}+E_\theta^{d+1,k}]+(v^{\xi(k,l)}-1)[\sigma].
\end{equation}

$(d)$ Assume that $\sigma_{0,k}=\sigma_{d,l}=1$. Then
\begin{equation}
[T_p]\ast [\sigma]=[\sigma-E_{\theta}^{0,k}-E_{\theta}^{d,l}+E_{\theta}^{1,k}+E_\theta^{d+1,l}].
\end{equation}
\end{lemma}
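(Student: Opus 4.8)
The plan is to reduce the computation of each convolution product $[T_\bullet]\ast[\sigma]$ to a local count on the lattice chains, exactly along the lines of \cite[Proposition 3.5]{Lu99} and \cite[Lemma 4.3.1]{FLLLW20}, using the $V=\bigoplus_{i,j}V_{ij}$ decomposition of Proposition \ref{fenjie} as the main bookkeeping device. First I would fix a pair $(\Lambda,\Lambda'')\in \mathcal{O}_{\sigma}$ together with a basis $\{x_{ij}^m\}$ as in \eqref{decomposition}, and for each of the four generators $[T_h]$, $[T_0]$, $[T_d]$, $[T_\rho]$ enumerate the chains $\Lambda'\in\mathcal Y_d^{\mathfrak d}$ with $(\Lambda,\Lambda')$ in the support of the corresponding $[T_\bullet]$; since $[T_\bullet]$ forces $\Lambda'_i=\Lambda_i$ for all but one or two indices $i$ in $[0,d]$ (and the relations $\Lambda'_i=\varepsilon\Lambda'_{i+D}$, $(\Lambda'_i)^\ast=\Lambda'_{D-i}$ propagate the remaining ambiguity periodically and dually), such $\Lambda'$ is determined by the choice of a single line, i.e. a point of a $\mathbb P^1$ or $\mathbb A^1$ over $\mathbb F_q$. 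Then I would identify, for each such $\Lambda'$, the matrix $\Phi(\Lambda',\Lambda'')\in\Sigma_d$; the bulk of the work is to see that the generic position gives $\sigma-E_\theta^{h,k}-E_\theta^{h+1,l}+E_\theta^{h,l}+E_\theta^{h+1,k}$ (respectively the analogous matrix), while the non-generic positions leave $\sigma$ unchanged, and to count the two populations.

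The key steps, in order: (i) record that $\Phi$ is a bijection (Proposition \ref{Iso}), so it suffices to compute the matrices $\Phi(\Lambda',\Lambda'')$ and the orbit sizes; (ii) for part $(a)$, use $\sigma_{h,k}=\sigma_{h+1,l}=1$ to locate the two basis vectors $x_{h,k}$ and $x_{h+1,l}$ governing the ambiguity at level $h$ of $\Lambda$, write $\Lambda'_h$ as spanned by a vector of the shape $x_{h+1,l}+t\,x_{h,k}$ (modulo lower terms) with $t$ ranging over $\mathbb F_q$ or its extension, and compute $|\Lambda'_i+\Lambda'_i\cap\Lambda''_j|/|\Lambda'_{i-1}+\Lambda'_i\cap\Lambda''_{j-1}|$ directly; the case $t\neq 0$ (generic) yields the new matrix with the $(h,k),(h+1,l)$ entries swapped into $(h,l),(h+1,k)$ position and contributes the coefficient $v^{\xi(k,l)}$ coming from the number of nonzero $t$'s (here the factor $2$ in $\xi$ reflects that, for $k>l$, the isotropy/duality constraint $(\Lambda'_i)^\ast=\Lambda'_{D-i}$ couples the level-$h$ choice with the dual level and so the relevant affine line is over $\mathbb F_{q^2}$, whence $q^2-1$ points and exponent $\xi(k,l)=2$ after the substitution $v=\sqrt q$), while $t=0$ reproduces $\sigma$ and contributes $v^{\xi(k,l)}-1$; (iii) for parts $(b)$ and $(c)$, repeat the same analysis at the special vertices $0$ and $d$ of the affine Dynkin diagram of type $D$, where the constraint $\Lambda_{-1}\subset\Lambda'_1$ (resp. $\Lambda_{d-1}\subset\Lambda'_{d+1}$) in the definition of $[T_0]$ (resp. $[T_d]$) pins down which of the two lattices over $\Lambda_{h-1}$ containing it is allowed — this is the affine/lattice incarnation of Proposition \ref{different} — and check that the same swap-of-entries formula and the same coefficients result; (iv) for part $(d)$, observe that $[T_\rho]$ is the characteristic function of a single orbit giving an automorphism of the chain (a "rotation"), so $[T_\rho]\ast[\sigma]$ is a permutation of the index set realized by $\sigma\mapsto\sigma-E_\theta^{0,k}-E_\theta^{d,l}+E_\theta^{1,k}+E_\theta^{d+1,l}$ with coefficient $1$ and no correction term; (v) finally confirm that all the resulting matrices again lie in $\Sigma_d$, i.e. that the parity condition $\sum_{i\le 0<j}\sigma_{ij}+\sum_{i\le d<j}\sigma_{ij}\equiv 0$ is preserved under each of these moves — this follows since each move changes two of the relevant counts by $\pm 1$ in a paired fashion — and that the polynomial identities over $\mathbb Z[v,v^{-1}]$ hold for all odd prime powers $q$, hence as identities.

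The main obstacle I anticipate is the careful handling of the two exceptional vertices $h=0$ and $h=d$: there the Borel/Iwahori combinatorics of affine type $D$ differs from type $A$ (two maximal isotropics over a codimension-one isotropic, by Proposition \ref{different}/\ref{if and only if}), and one must verify that the extra containment conditions built into $[T_0]$ and $[T_d]$ single out exactly one branch so that the count is still governed by an affine or projective line and the resulting matrix still satisfies the mod-$2$ condition defining $\Sigma_d$. Keeping track of the $\varepsilon$-periodicity $\sigma_{i+D,j+D}=\sigma_{ij}$ and the duality $\sigma_{1-i,1-j}=\sigma_{ij}$ simultaneously — so that altering one entry forces the symmetric and translated entries to move as well, which is precisely why the elementary matrices $E_\theta^{ij}=E^{ij}+E^{1-i,1-j}$ appear rather than single matrix units — will require some care but is routine once the $V_{ij}$-decomposition is in place. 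The type-$A$ part $(a)$ is essentially \cite[Proposition 3.5]{Lu99} adapted to the orthogonal setting, so I would state it quickly and spend the detailed argument on $(b)$–$(d)$.
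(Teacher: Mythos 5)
Your overall strategy---fix a pair of chains, enumerate the intermediate chains $\Lambda'$ permitted by $[T_\bullet]$ as the points of a line inside a two-dimensional quotient such as $\Lambda_{h+1}/\Lambda_{h-1}$, and sort them according to their relative position with the second chain, using the normal form of Proposition \ref{fenjie}---is exactly the local count in the style of \cite{Lu99} and \cite{FLLLW20} that the paper itself invokes (the paper gives no further details). However, there is a concrete error in the step that actually produces the coefficients, which is the entire quantitative content of the lemma. The exponent $\xi(k,l)=2$ has nothing to do with a line over $\mathbb{F}_{q^2}$: for $h\in[1,d-1]$ the duality $(\Lambda'_h)^{\ast}=\Lambda'_{D-h}$ \emph{determines} the dual level from the level-$h$ choice rather than enlarging the field of definition, so the admissible $\Lambda'$ form a $\mathbb{P}^1(\mathbb{F}_q)$ with one point removed, i.e.\ $q$ elements, and the two populations have sizes $q$ and $q-1$, never $q^2-1$. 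The factor $2$ is purely an artifact of the normalization $v=\sqrt{q}$, so that $q=v^{2}$ and $q-1=v^{\xi(k,l)}-1$. If you carried out the count as written, with $q^{2}-1$ nonzero parameters $t$, you would obtain $T_h^{2}=(v^{4}-1)T_h+v^{4}$, contradicting both the stated formulas and the relations $T_i^{2}=(v^{2}-1)T_i+v^{2}$ that the paper matches afterwards; the same remark applies at the special vertices $0$ and $d$, where the parameter is again $q$.

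Relatedly, your bookkeeping of which pair is fixed is muddled. By the definition of the structure constants you must fix $(\Lambda,\Lambda'')$ in the candidate \emph{output} orbit and count intermediates; fixing $(\Lambda,\Lambda'')\in\mathcal{O}_{\sigma}$ and distributing the neighbours $\Lambda'$ according to $\Phi(\Lambda',\Lambda'')$ computes instead the coefficient of $[\sigma]$ in $[T_\bullet]\ast[\sigma'']$ for the various $\sigma''$, which recovers the lemma only after relabelling; and in any case the attribution ``$t\neq 0$ contributes $v^{\xi(k,l)}$, $t=0$ contributes $v^{\xi(k,l)}-1$'' matches no actual count (a single value of $t$ cannot contribute $q-1$; the correct split when $k>l$ is that one special choice of $\Lambda'$ lands in the new orbit and the remaining $q-1$ stay in $\mathcal{O}_{\sigma}$, while the coefficient $v^{\xi(k,l)}=q$ of the new matrix arises from the count taken with the fixed pair in the new orbit). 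Your points (iii)--(v)---the analysis at the fork vertices via the extra containment conditions, the length-zero generator $T_{\rho}$ acting by a permutation of the basis with coefficient $1$, and the preservation of the parity condition defining $\Sigma_d$---are fine in outline and consistent with the intended argument.
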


A product of basis elements $[B_1]\ast \cdots \ast [B_m]$
 in $\mathcal{H}_{d;\mathcal{A}}^{\mathfrak{d}}$ is called monomial if for each $i$, $B_i$ is of the form $T_{\alpha}$ for some $\alpha \in [0,d]\cup \{p\}$.

From Lemma \ref{multiplication} and a similar argument to \cite[\S 8.6]{BF05}, we get the following proposition.
\begin{proposition}
For any $\sigma \in \Sigma_d$, there exists a monomial product $[B_1] \ast \cdots \ast [B_m] \in \mathcal{H}_{d; \mathcal{A}}^{\mathfrak{d}}$ such
that $[\sigma]=[B_1]\ast \cdots \ast [B_m]$.
\end{proposition}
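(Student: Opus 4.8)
The plan is to show that every basis element $[\sigma]$ can be written as a monomial by inducting on a suitable measure of the ``distance'' of $\sigma$ from the identity matrix. Concretely, one first identifies the matrix $\sigma_{\mathrm{id}} \in \Sigma_d$ corresponding to the diagonal orbit $\mathrm{SO}_F(V)\cdot(\Lambda,\Lambda)$; by the definition of $\Phi$ this is the ``permutation matrix'' with $(\sigma_{\mathrm{id}})_{ii}=1$ for all $i$, and $[\sigma_{\mathrm{id}}]$ is the unit of $\mathcal{H}_{d;\mathcal{A}}^{\mathfrak{d}}$, hence a (length-zero) monomial. For a general $\sigma$, the associated orbit corresponds under the Iwahori--Bruhat decomposition (Proposition \ref{fenjie}) to an element of the affine Weyl group of type $D$ together with the extension datum recorded by the parity condition and by $[T_\rho]$; I would introduce the length $\ell(\sigma)$ of that Weyl group element (equivalently, the dimension of the orbit $\mathcal{O}_\sigma$, read off from the off-diagonal entries of $\sigma$ via a formula of the shape $\sum_{i<k,\, j>l}\sigma_{ij}\sigma_{kl}$ counted over a fundamental domain) and argue by induction on $\ell(\sigma)$.

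The inductive step runs as follows. If $\ell(\sigma)=0$, then $[\sigma]$ equals either the unit or $[T_\rho]$ composed with the unit (the only orbits of length zero being those fixing all proper steps of the flags), so it is a monomial. If $\ell(\sigma)>0$, choose a simple reflection $T_\alpha$ with $\alpha \in [0,d]$ that ``decreases'' $\sigma$: this amounts to finding $h$ (or the special indices $0$, $d$) and indices $k,l$ with $\sigma_{h,k}=\sigma_{h+1,l}=1$ and $k>l$ (respectively the analogous inequalities in cases (b), (c) of Lemma \ref{multiplication}), which exists precisely because $\sigma$ is not of minimal length in its coset. Then set $\sigma' := \sigma - E_\theta^{h,k}-E_\theta^{h+1,l}+E_\theta^{h,l}+E_\theta^{h+1,k}$; one checks $\sigma' \in \Sigma_d$ (the parity condition is preserved because $E_\theta$-moves change the relevant sums by an even amount) and $\ell(\sigma') = \ell(\sigma)-1$. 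Lemma \ref{multiplication}(a) gives
\begin{equation*}
[T_h]\ast[\sigma'] = v^{\xi(k',l')}[\sigma] + (v^{\xi(k',l')}-1)[\sigma'],
\end{equation*}
where $k',l'$ are the column indices attached to rows $h,h+1$ in $\sigma'$; since the $E_\theta$-move swapped these, we have $k'<l'$ wait --- more carefully, one selects the move so that applying $[T_h]$ to $\sigma'$ \emph{raises} length, i.e. $\xi=2$ in the formula, giving $[T_h]\ast[\sigma'] = v^2[\sigma]+(v^2-1)[\sigma']$, whence
\begin{equation*}
[\sigma] = v^{-2}\big([T_h]\ast[\sigma'] - (v^2-1)[\sigma']\big).
\end{equation*}
By the induction hypothesis $[\sigma']$ is a monomial, say $[B_1]\ast\cdots\ast[B_{m-1}]$, and then $[\sigma]$ is an $\mathcal{A}$-linear (indeed $\mathbb{Q}(v)$-linear) combination of $[T_h]\ast[B_1]\ast\cdots\ast[B_{m-1}]$ and $[B_1]\ast\cdots\ast[B_{m-1}]$; since both summands are monomials and the proposition only claims $[\sigma]$ \emph{equals} a monomial, one must instead argue that the lower-order terms can be absorbed --- this is the standard triangularity argument of \cite[\S 8.6]{BF05}: order $\Sigma_d$ by length, observe that $[T_h]\ast[\sigma']$ equals $v^2[\sigma]$ plus strictly shorter terms, and conclude by downward substitution that $[\sigma]$ lies in the $\mathcal{A}$-span of monomials, then refine to a single monomial by repeating on each lower term.

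The main obstacle, and the point requiring genuine care rather than bookkeeping, is the verification that the three ``root-type'' generators $T_0, T_d$ (the two extra affine/type-$D$ nodes) together with $T_1,\dots,T_{d-1}$ actually suffice to reach every orbit --- that is, that the subgroup of the extended affine Weyl group they generate, modulo the length-zero part handled by $[T_\rho]$, is all of $\Sigma_d$. This is where the combinatorics of the type-$D$ affine diagram and the parity constraint $\sum_{i\leq 0<j}\sigma_{ij}+\sum_{i\leq d<j}\sigma_{ij}\equiv 0$ interact: one needs that the $E_\theta$-moves available from cases (a)--(c) of Lemma \ref{multiplication} generate all length-reducing steps, and that the residual length-zero orbits are exactly $\{\mathrm{id},\, T_\rho\}$, which follows from Proposition \ref{different orbits} and the structure of $\mathbf{I}_\Lambda$. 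I expect the cleanest route is to transport the problem to the known type-$A$/type-$C$ statements \cite[\S 8.6]{BF05}, \cite[Proposition 3.5]{Lu99}, \cite[Lemma 4.3.1]{FLLLW20} via the embedding of $\Sigma_d$ into the affine-$A$ matrix monoid, checking only that the symmetry $\sigma_{ij}=\sigma_{1-i,1-j}$ and the parity condition are respected at each reduction step.
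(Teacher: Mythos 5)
Your overall strategy---induction on the length of the (extended) affine Weyl group element attached to $\sigma$, driven by Lemma \ref{multiplication}, with $[T_\rho]$ absorbing the parity obstruction---is the right one and is close in spirit to the paper's proof, which delegates the even-parity case to the reduced-word combinatorics of \cite[\S 8.6]{BF05} and then applies a single multiplication by $[T_\rho]$ via Lemma \ref{multiplication}(d). However, your inductive step as written does not prove the statement. You run the recursion through the case $\xi=2$ of Lemma \ref{multiplication}(a), i.e. $[T_h]\ast[\sigma']=v^2[\sigma]+(v^2-1)[\sigma']$, and then solve for $[\sigma]$; this only exhibits $[\sigma]$ as a $\mathbb{Q}(v)$-linear combination of monomials, which is strictly weaker than the claim that $[\sigma]$ \emph{equals} a single monomial product. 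The closing step (``refine to a single monomial by repeating on each lower term'') is not an argument: a linear combination of monomials cannot be turned into one monomial by further expanding its summands. You have also misread the dichotomy: $\xi(k,l)=2$ means $k>l$, which is the length-\emph{decreasing} (descent, quadratic-relation) case, not the raising one; the length-raising case is $\xi=0$, and that is exactly the case in which the formula is multiplicity-free.

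The repair is to run the induction through the $\xi=0$ case. If $\sigma$ has a descent at some $h\in[1,d-1]$, say $\sigma_{h,k}=\sigma_{h+1,l}=1$ with $k>l$, set $\sigma'=\sigma-E_\theta^{h,k}-E_\theta^{h+1,l}+E_\theta^{h,l}+E_\theta^{h+1,k}$; then the relevant columns of $\sigma'$ are in increasing order, so Lemma \ref{multiplication}(a) gives $[T_h]\ast[\sigma']=[\sigma]$ on the nose, with no lower-order term and no power of $v$, and $\sigma'$ is strictly shorter (the cases involving $T_0$ and $T_d$ are handled the same way via parts (b) and (c)). The induction terminates at the length-zero elements, and the parity condition in $\Sigma_d$ is what makes these exactly the identity matrix and the matrix attached to $T_\rho$; the odd-parity case is settled by part (d), which is already multiplicity-free, so one simply prepends a single factor $[T_\rho]$. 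This is precisely the paper's route: it quotes \cite[\S 8.6]{BF05} for the fact that every $\sigma$ with both parity sums even is a product of the $[T_j]$, $j\in[0,d]$ (the reduced-word argument just sketched), and reduces the odd-parity case to it by one application of Lemma \ref{multiplication}(d). Once you flip the direction of your inductive step, your argument coincides with the paper's; as written, it proves only membership in the span of monomials.
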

\begin{proof}
  By the results of \cite[\S 8.6]{BF05}, for any matrix $\sigma=(\sigma_{ij}) \in \Sigma_d$ such that $\sum_{i\leq 0< j}\sigma_{ij}\equiv \sum_{i\leq d<j}\sigma_{ij}\equiv 0\ {\rm mod} \ 2$, there exists a monomial product such that
 \begin{equation}\label{monomial basis}
 [\sigma]=[B_1]\ast \cdots \ast [B_m],
 \end{equation}
 where $B_i$ is of the form $T_j$ for some $j \in [0,d]$.
  By Lemma \ref{multiplication}, given a matrix $\sigma \in \Sigma_d$ satisfying that $\sum_{i\leq 0< j}\sigma_{ij}\equiv \sum_{i\leq d<j}\sigma_{ij}\equiv 1\ {\rm mod}\ 2$, we have
 $$[T_p]\ast[\sigma]=[\sigma'],$$
  where $\sigma'=\sigma-E_{\theta}^{0,k}-E_{\theta}^{d,l}+E_{\theta}^{1,k}+E_\theta^{d+1,l}$.
 Then there exists a monomial product such that $[\sigma']=[B_1']\ast \cdots \ast [B_{m'}']$ is of the form \eqref{monomial basis}.
 Thus,
 $$[\sigma]=[T_p]\ast[B_1']\ast \cdots \ast [B_{m'}'].$$
 We complete the proof.
\end{proof}
\begin{example}
 Consider the matrix $\sigma=(\sigma_{ij})_{i,j \in \mbb Z} \in \Sigma_d$ as following:

\[\sigma =
\begin{tabular}{   c | c | c | c | c| c| c| c| c | c | c | c}

 & $c_{-1}$ & $c_0$ & $c_1$ & $c_2$  & $\cdots$ & $c_d$ & $c_{d+1}$ & $\cdots$ & $c_{D-1}$ & $c_D$ & $c_{D+1}$  \\
\hline
 $r_1$ & & & & & & & & & & & 1   \\
 \hline
 $r_2$ & & & & 1 & & & & & & &   \\
\hline
 $\vdots$ & & & & & $\ddots$ & & & & & &   \\
\hline
 $r_d$ & & & & & & 1 & & & & &   \\
\hline
 $r_{d+1}$ & & & & & & & 1 & & & &   \\
\hline
 $\vdots$ & & & & & & & & $\ddots$ & & &   \\
\hline
 $r_{D-1}$ & & & & & & & & & 1 & &   \\
\hline
 $r_D$ & & 1 & & & & & & & & &   \\

\end{tabular}
\]
where `$r_i$' and `$c_j$' in the table  indicate the $i$-th row and $j$-th column of the matrix $\sigma$, respectively.
We have
\begin{align*}
[\sigma]=[T_{\rho}]\ast[T_1]\ast\cdots\ast[T_{d-1}]\ast[T_d]\ast[T_{d-2}]\ast\cdots\ast[T_1].
\end{align*}
\end{example}
Recall \cite{Lu83} that the (extended) affine-Hecke algebra $\mathbf{H}_{\widetilde{D_d}}$ of type $D$ is a unital associative algebra over $\mathcal{A}$ generated by $T_i$ for $i \in [0,d]$ and $T_{\rho}$ subject to the following relations:
\begin{align}
&T_i^2=(v^2-1)T_i+v^2,  \ \ 0\leq i\leq d, \notag \\
&T_jT_{j+1}T_j=T_{j+1}T_jT_{j+1}, \ \  1\leq j<d-1, \notag\\
&T_iT_j=T_jT_i, \ \  1\leq i,j\leq d-1 \ {\rm and} \ |i-j|>1, \notag\\
&T_0T_k=T_kT_0, \ \  k\neq 2, \ \ T_dT_l=T_lT_d, \ \  l\neq d-2, \notag\\
&T_0T_2T_0=T_2T_0T_2,  \ \  T_{d-2}T_dT_{d-2}=T_dT_{d-2}T_{d}, \notag\\
&T_0=T_{\rho}T_1T_{\rho},  \ \  T_d=T_{\rho}T_{d-1}T_{\rho}, \label{formular 1}\\
&T_i=T_{\rho}T_iT_{\rho}, \ \  1<i<d-1 \label{extended formula 2} .
\end{align}

\begin{proposition}
 The assignment of sending the functions $[T_{\alpha}]$, for $\alpha \in [0,d]\cup \{p\}$,
 in the algebra $\mathcal{H}_{d;\mathcal{A}}^{\mathfrak{d}}$ to the generators $T_{\alpha}$ of $\mathbf{H}_{\widetilde{D_d}}$ in the same indexes is an isomorphism.
\end{proposition}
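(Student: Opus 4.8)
The plan is to realize the isomorphism in the customary way: first construct an $\mathcal{A}$-algebra homomorphism
$$\Psi\colon \mathbf{H}_{\widetilde{D_d}} \longrightarrow \mathcal{H}_{d;\mathcal{A}}^{\mathfrak{d}},\qquad T_\alpha\longmapsto [T_\alpha]\quad(\alpha\in[0,d]\cup\{\rho\}),$$
by checking that the elements $[T_\alpha]$ satisfy the defining relations; then prove that $\Psi$ is bijective, so that its inverse is the asserted isomorphism; finally tensor with $\mathbb{Q}(v)$ over $\mathcal{A}$ to obtain the statement for $\mathcal{H}_d^{\mathfrak{d}}$.

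\textbf{Well-definedness.} I would check the relations one family at a time. The quadratic relations $[T_i]\ast[T_i]=(v^2-1)[T_i]+v^2$ for $0\le i\le d$, and the braid and commutation relations among $[T_0],\dots,[T_d]$ (those not involving $[T_\rho]$), each involve only generators attached to a sub-diagram of $\widetilde{D_d}$ of rank $\le 2$, so each follows from Lemma~\ref{multiplication}(a)--(c) by specializing $\sigma$ so that the two relevant entries sit in each of the two possible relative positions; this is precisely the rank one and two computation already carried out for finite type $D$ in \cite{FL14} and for affine type $C$ in \cite{FLLLW20}. For the relations \eqref{formular 1} and \eqref{extended formula 2}, I would use part~(d) of Lemma~\ref{multiplication} (where $[T_\rho]$ is written $[T_p]$) together with the explicit orbit description of $[T_\rho]$: a direct computation shows $[T_\rho]\ast[T_1]\ast[T_\rho]=[T_0]$, $[T_\rho]\ast[T_{d-1}]\ast[T_\rho]=[T_d]$, and $[T_\rho]\ast[T_i]\ast[T_\rho]=[T_i]$ for $1<i<d-1$.

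\textbf{Bijectivity.} Surjectivity of $\Psi$ is immediate from the Proposition on monomial expressions proved just above: every basis vector $[\sigma]$, $\sigma\in\Sigma_d$, equals a monomial $[B_1]\ast\cdots\ast[B_m]$ with each $B_i$ of the form $T_\alpha$, hence lies in the image. For injectivity I would exploit the Iwahori--Bruhat decomposition: by Proposition~\ref{Iso} together with the identification $\mathcal{Y}_d^{\mathfrak{d}}\simeq{\rm SO}_F(V)/\mathbf{I}_\Lambda$ from Section~3, the index set $\Sigma_d$ is in bijection with the double coset set $\mathbf{I}_\Lambda\backslash{\rm SO}_F(V)/\mathbf{I}_\Lambda$, hence with the extended affine Weyl group $W$ of type $\widetilde{D_d}$; write $\sigma_w$ for the matrix attached to $w\in W$, and transport the length function and Bruhat order to $\Sigma_d$. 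The length-increasing cases ($\xi(k,l)=0$) of Lemma~\ref{multiplication}(a)--(c), and part~(d), then say that left multiplication by $[T_i]$ sends $[\sigma_w]$ to $[\sigma_{s_iw}]$ whenever $\ell(s_iw)>\ell(w)$, and left multiplication by $[T_\rho]$ sends $[\sigma_w]$ to $[\sigma_{\rho w}]$. Inducting on $\ell(w)$ along a reduced expression then yields $\Psi(T_w)=[\sigma_w]$; hence $\Psi$ carries the $\mathcal{A}$-basis $\{T_w\mid w\in W\}$ of $\mathbf{H}_{\widetilde{D_d}}$ bijectively onto the $\mathcal{A}$-basis $\{[\sigma]\mid\sigma\in\Sigma_d\}$ of $\mathcal{H}_{d;\mathcal{A}}^{\mathfrak{d}}$, so it is an isomorphism.

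\textbf{Main obstacle.} The substantive work, relative to types $A$ and $C$, concentrates at the two forks of the affine diagram. In the well-definedness step one must verify carefully that $[T_0]$ and $[T_d]$ --- defined with the asymmetric conditions $\Lambda_{-1}\subset\Lambda_1'$ and $\Lambda_{d-1}\subset\Lambda_{d+1}'$ --- obey the length-$3$ braid relations with $[T_2]$ and $[T_{d-2}]$ and commute with the remaining generators, and that $[T_\rho]$ enacts the order-two diagram twist exactly as in \eqref{formular 1}--\eqref{extended formula 2}; tracking this requires careful bookkeeping with the parity conditions defining $\mathcal{Y}_d^{\mathfrak{d}}$ and $\Sigma_d$. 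On the injectivity side the delicate point is to pin down the bijection $\Sigma_d\leftrightarrow W$ so that length and the multiplication formulas of Lemma~\ref{multiplication} match the Coxeter-theoretic ones --- that is, that the $\xi=0$ cases genuinely correspond to length-increasing multiplication in $W$. Once these compatibilities are secured, the rest is formal.
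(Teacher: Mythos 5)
Your proposal is correct and follows essentially the same route as the paper: the quadratic, braid and commutation relations among $[T_0],\dots,[T_d]$ are reduced to the finite-type rank~$\le 2$ computations, and the key extended relations are verified exactly as in the paper, by the direct orbit count giving $[T_\rho]\ast[T_1]\ast[T_\rho]=[T_0]$ (and its mirror) together with Lemma~\ref{multiplication}(d). The only difference is one of completeness, not of method: the paper's written proof stops at checking the defining relations, leaving surjectivity to the preceding monomial-basis proposition and injectivity implicit, whereas you spell out the basis-to-basis argument via the identification of $\Sigma_d$ with the Iwahori double cosets and induction on length --- a standard Iwahori--Matsumoto-style supplement rather than a divergent approach.
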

\begin{proof}
The relations above except the labeled ones are reduced to the finite type and hence omit it.
Since (\ref{extended formula 2}) is clear,
 we only need to prove (\ref{formular 1}) holds. Note that
 \begin{align*}
 [T_p]\ast[T_1]\ast[T_p]=\sharp G_{T_p,T_1}[T_0],
 \end{align*}
 where $G_{T_p, T_1}$ is set of the pairs $(\Lambda',\Lambda'')$ in $\mcal{Y}_d^{\mathfrak{d}} \times \mcal{Y}_d^{\mathfrak{d}}$ determined by the following conditions:
 \begin{enumerate}
 \item $\Lambda_0'=\Lambda_0''=\Lambda_1\cap \Lambda_1'$;

 \item $\Lambda_1'=\Lambda_1$ and $\Lambda_1''=\widetilde{\Lambda}_1$;

\item $\Lambda_i'=\Lambda_i''=\Lambda_i$ for $i \in [2,d]$;

\item $(\Lambda, \widetilde{\Lambda})$ is a fixed pair in $\mcal{Y}_d^{\mathfrak{d}} \times \mcal{Y}_d^{\mathfrak{d}}$ whose associated matrix is $T_0$.
\end{enumerate}
 It is clear that $\sharp G_{T_p,T_1}=1$, which implies that $[T_0]=[T_p]\ast[T_1]\ast[T_p]$.
  The left one is similar, and hence we omit it.
\end{proof}
\subsection{The canonical basis of $\mathcal{H}_{d;\mcal A}^{\mathfrak{d}}$}
Fix $\Lambda \in \mathcal{Y}_{d}^{\mathfrak{d}}$.
For $\sigma \in \Sigma_{d}$, we define
$$Y^{\Lambda}_{\sigma}=\left\{ \Lambda' \in \mathcal{Y}_{d}^{\mathfrak{d}} \mid (\Lambda,\Lambda') \in \mathcal{O}_{\sigma} \right\}.$$
This is an orbit of the stabilizer subgroup ${\rm Stab}_{{\rm SO}_{F}(V)}(\Lambda)$ of ${\rm SO}_{F}(V)$, and one can associate to it a structure of quasi-projective algebraic variety. Now, we compute its dimension $d(\sigma)$.

The following lemma analogue of \cite[Lemma 4.3]{Lu99} and \cite[Lemma 4.1.1]{FLLLW20}.
\begin{lemma}\label{matric}
 Fix $\Lambda \in \mathcal{Y}_{d}^{\mathfrak{d}}$.
 For $\sigma \in \Sigma_{d}$,
 the dimension of $Y_{\sigma}^\Lambda$ is given by
\begin{equation}\label{Dimen eq}
  d(\sigma)=\frac{1}{2}\Big(\sum_{\substack{i\geq k, j<l\\ i\in [1,D]}}\sigma_{ij}\sigma_{kl}-\sum_{i\geq 1>j}\sigma_{ij}-\sum_{i\geq d+1>j}\sigma_{ij} \Big).
\end{equation}
\end{lemma}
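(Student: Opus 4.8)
The plan is to compute the dimension of the orbit $Y_\sigma^\Lambda$ by choosing an explicit representative pair $(\Lambda,\Lambda')$ in $\mathcal{O}_\sigma$ and parametrizing all lattice chains $\Lambda''$ in the same relative position as $\Lambda'$, i.e.\ counting degrees of freedom in building $\Lambda''$ step by step. Concretely, I would invoke Proposition~\ref{fenjie} to fix the decomposition $V=\bigoplus_{i,j}V_{ij}$ adapted to $(\Lambda,\Lambda')$ together with the distinguished basis $\{x_{ij}^m\}$ satisfying \eqref{decomposition}. A second chain $\Lambda''$ with $(\Lambda,\Lambda'')\in\mathcal{O}_\sigma$ differs from $\Lambda'$ by the choice, at each step $j$, of how the graded piece $\Lambda_j''/\Lambda_{j-1}''$ sits inside the ambient space relative to the filtration by $\Lambda$; this is governed by a unipotent-type group acting on the $V_{ij}$'s, and the number of free parameters is the count of pairs of boxes $(i,j)$, $(k,l)$ with $i\geq k$ and $j<l$ (a box in a later $\Lambda'$-step but an earlier-or-equal $\Lambda$-step), each contributing $\sigma_{ij}\sigma_{kl}$ to the dimension — exactly as in the type $A$ computation of \cite[Lemma 4.3]{Lu99} and the affine type $C$ computation of \cite[Lemma 4.1.1]{FLLLW20}.

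The periodicity $\sigma_{ij}=\sigma_{i+D,j+D}$ means the naive sum $\sum_{i\geq k, j<l}\sigma_{ij}\sigma_{kl}$ over all of $\mathbb{Z}\times\mathbb{Z}$ must be regularized: one restricts the outer index $i$ to a fundamental domain $[1,D]$, which accounts for the constraint $i\in[1,D]$ in \eqref{Dimen eq}. Then, because we are in type $D$ rather than type $A$, the orbit lives inside ${\rm SO}_F(V)$ (a group of roughly half the dimension of ${\rm GL}$), and the self-duality constraints $\Lambda_i^\ast=\Lambda_{D-i}$, $\Lambda_j'^\ast=\Lambda_{D-j}'$ force the parameters to come in dual pairs — hence the global factor $\tfrac12$. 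The correction terms $-\sum_{i\geq 1>j}\sigma_{ij}$ and $-\sum_{i\geq d+1>j}\sigma_{ij}$ arise from the two ``wall'' indices ($0$ and $d$) at which the orthogonality/isotropy conditions cut down the available parameters: these are precisely the places where a box straddling the hyperplane does not contribute a full pair of free coordinates but only one (or where an apparent parameter is pinned down by $Q(x,x)\in\mathfrak{o}$-type relations, exactly as in the contradiction arguments in the proofs of Propositions~\ref{odd} and \ref{iff}).

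In practice I would proceed as follows. First, set up the representative and basis via Proposition~\ref{fenjie}. Second, describe ${\rm Stab}_{{\rm SO}_F(V)}(\Lambda)$ concretely as the Iwahori subgroup $\mathbf{I}_\Lambda$ computed in \S3.2 and identify $Y_\sigma^\Lambda$ with $\mathbf{I}_\Lambda/(\mathbf{I}_\Lambda\cap {}^{\gamma}\mathbf{I}_\Lambda)$ for a suitable affine Weyl group element $\gamma$ representing $\sigma$, so that $\dim Y_\sigma^\Lambda$ is the number of affine roots $\alpha>0$ with $\gamma^{-1}\alpha<0$, counted with the type $D$ root data. Third, translate this root-theoretic count into the matrix formula by the standard dictionary between elements of the extended affine Weyl group of type $D$ and matrices in $\Sigma_d$, carefully bookkeeping the two special nodes. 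The main obstacle I anticipate is exactly this last bookkeeping step: verifying that the type $D$ subtleties — the two maximal isotropic families (Proposition~\ref{different}), the $\mathrm{mod}\ 2$ parity conditions defining $\Sigma_d$, and the special generators $T_0,T_d,T_\rho$ — interact with the length function so as to produce precisely the two subtracted sums in \eqref{Dimen eq} and not some other boundary correction; I would check this by direct comparison against the type $C$ formula of \cite[Lemma 4.1.1]{FLLLW20} and by testing small cases such as the $[T_h]$, $[T_0]$, $[T_d]$, $[T_\rho]$ orbits, whose dimensions are read off from Lemma~\ref{multiplication} (they should be $1$, $1$, $1$, and $0$ respectively, matching the exponents $\xi$ appearing there and the fact that $[T_\rho]$ generates a length-zero element).
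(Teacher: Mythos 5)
Your overall route is the one the paper itself relies on: the paper offers no argument for Lemma \ref{matric} beyond declaring it an analogue of \cite[Lemma 4.3]{Lu99} and \cite[Lemma 4.1.1]{FLLLW20}, and your plan --- normalize the pair via Proposition \ref{fenjie}, identify $Y_\sigma^\Lambda$ with an $\mathbf{I}_\Lambda$-orbit (affine Schubert cell) so that $d(\sigma)$ is a length/parameter count, regularize the inversion sum by restricting $i\in[1,D]$, and trace the factor $\tfrac12$ to the self-duality $\Lambda_i^\ast=\Lambda_{D-i}$ --- is exactly that analogy carried out, and your small-case checks against Lemma \ref{multiplication} are sound.

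The gap is in the one step that is genuinely type-$D$-specific, and the first alternative you offer for it points to the wrong answer. Write $I_A=\sum_{i\ge k,\ j<l,\ i\in[1,D]}\sigma_{ij}\sigma_{kl}$ for the type $A$ inversion count of the pair of period-$D$ chains. The involutions $(i,j)\mapsto(1-i,1-j)$ and $(i,j)\mapsto(D+1-i,D+1-j)$ act on the set of inversions, and their fixed inversions are exactly those counted by $\sum_{i\ge 1>j}\sigma_{ij}$ and $\sum_{i\ge d+1>j}\sigma_{ij}$. Each free two-element orbit of inversions contributes a single parameter after imposing the form, which is what produces the $\tfrac12$. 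For a wall-straddling (fixed) inversion there are two possible bookkeepings: it contributes one parameter --- that is the symplectic/type $C$ situation, where these are genuine long-root directions, and it yields $\tfrac12\bigl(I_A+\sum_{i\ge1>j}\sigma_{ij}+\sum_{i\ge d+1>j}\sigma_{ij}\bigr)$ --- or it contributes nothing because the would-be coordinate is forced to vanish by $Q(x,x)\in\mathfrak{o}$ (resp. $Q(x,x)\in\mathfrak{m}$ at the $d$-wall); only the latter gives the minus signs in \eqref{Dimen eq}. Your sketch says the straddling box ``does not contribute a full pair of free coordinates but only one,'' which is the type $C$ count and contradicts the lemma; the parenthetical alternative (the parameter is pinned down by a $Q(x,x)$-type relation) is the correct one and is precisely what must be proved, e.g. by exhibiting the unipotent part of $\mathbf{I}_\Lambda\cap{}^{\gamma}\mathbf{I}_\Lambda$ in the basis of Proposition \ref{fenjie}, or root-theoretically by noting that the affine root system of type $D$ has no long roots, so the involution-fixed type $A$ inversions correspond to no root subgroup at all. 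Testing $[T_h],[T_0],[T_d],[T_\rho]$ does detect the sign but only probes length $\le 1$, so it cannot substitute for this argument.
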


Define a partial order $"\leq"$ on $\Sigma_{d}$ by $\sigma \leq \sigma'$ if $\mathcal{O}_{\sigma} \subset \overline{\mathcal{O}_{\sigma'}}$.
For any $\sigma, \sigma' \in \Sigma_{d}$,
we say that $\sigma \preceq \sigma' $ if and only if
$$\sum_{k\geq i, l\leq j }\sigma_{kl} \leq \sum_{k\geq i, l\leq j}\sigma'_{kl},  \ \ \forall i>j.$$
Since the Bruhat order of affine type $D$ is compatible with the Bruhat order of affine type $A$,
we see that the partial order "$\leq$" is compatible with  the Bruhat order of affine type $D$.

Assume for now that the ground field is $\overline{\mbb F}$ of the finite field $\mbb F$.
Let $IC_{\sigma}$ be the intersection cohomology complex of the closure $\overline{Y_{\sigma}^\Lambda}$ of $Y_{\sigma}^\Lambda$,
taken in certain ambient algebraic variety over $\overline{\mbb F}$, such that the restriction of the stratum $IC_{\sigma}$ to $Y_{\sigma}^\Lambda$ is the constant sheaf on $Y_{\sigma}^\Lambda$.
 We refer to \cite{BBD82} for the precise definition of intersection complexes.
 The restriction of the $i$-th cohomology sheaf $\mathcal{H}_{Y_{\sigma}^\Lambda}^i(IC_{\sigma})$ of $IC_{\sigma}$ to $Y_{\sigma'}^\Lambda$ for $\sigma' \leq \sigma$ is a trivial local system, whose rank is denoted by $n_{\sigma',\sigma,i}$.
 Set
\begin{align*}
\{\sigma\}_d=\sum_{\sigma' \leq \sigma}P_{\sigma',\sigma}[\sigma], \  \  {\rm where} \ P_{\sigma',\sigma}=\sum_{i\in \mathbb{Z}}n_{\sigma',\sigma,i}v^{i-d(\sigma)}.
\end{align*}
The polynomials $P_{\sigma',\sigma}$ satisfy
\begin{align*}
P_{\sigma,\sigma}=1,  \ \  P_{\sigma',\sigma} \in v^{-1}\mathbb{Z}[v^{-1}] \ {\rm for \ any}\ \sigma' \leq \sigma.
\end{align*}
Recall $\left\{[\sigma] \mid \sigma \in \Sigma_{d} \right\}$ forms an $\mcal A$-basis of $\mathcal{H}_{d;\mcal A}^{\mathfrak{d}}$.
We have the following theorem.
\begin{theorem}
The set $\left\{\{\sigma\}_d\mid \sigma \in \Sigma_{d}\right\}$ forms an $\mathcal{A}$-basis of $\mathcal{H}_{d;\mathcal{A}}^{\mathfrak{d}}$, called the canonical basis. Moreover, the structure constants of $\mathcal{H}_{d;\mcal A}^{\mathfrak{d}}$ with respect to the canonical basis are in $\mathbb{N}[v,v^{-1}]$.
\end{theorem}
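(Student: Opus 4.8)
The plan is to imitate the now-standard argument (Beilinson–Lusztig–MacPherson, adapted to the affine setting by Lusztig \cite{Lu99} and to the type $C$/$B$ affine cases in \cite{FLLLW20}) showing that the IC-basis $\{\sigma\}_d$ of a convolution algebra on flags over a local field is a canonical basis with positive structure constants. First I would set up the bar involution $\overline{\phantom{x}}$ on $\mathcal{H}_{d;\mathcal{A}}^{\mathfrak{d}}$, characterized by $\overline{v}=v^{-1}$ on $\mathcal{A}$ and by requiring that for $\sigma$ with $\mathcal{O}_\sigma$ closed (a diagonal-type orbit) $[\sigma]$ is bar-invariant, together with compatibility with the convolution product after a suitable normalization $\langle\sigma\rangle = v^{-d(\sigma)}[\sigma]$; the dimension formula of Lemma~\ref{matric} is exactly what is needed to make $d$ additive along the strata so that this normalization behaves well under $\ast$. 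Using the multiplication formulas in Lemma~\ref{multiplication} one checks that the generators $[T_\alpha]$ (suitably normalized) are bar-invariant, and since by the preceding Proposition every $[\sigma]$ is a monomial in the $[T_\alpha]$ up to lower terms, one obtains that the bar involution is well defined and upper-triangular with respect to the partial order $\leq$: $\overline{[\sigma]} = [\sigma] + \sum_{\sigma' < \sigma} (\text{terms in }\mathcal{A})[\sigma']$.

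Next I would invoke the standard Kazhdan–Lusztig-type lemma: given a free $\mathcal{A}$-module with a basis indexed by a poset, an order-reversing (w.r.t.\ the appropriate convention) bar involution that is unitriangular, there is a unique bar-invariant basis $\{\sigma\}_d = \sum_{\sigma'\leq\sigma} P_{\sigma',\sigma}[\sigma']$ with $P_{\sigma,\sigma}=1$ and $P_{\sigma',\sigma}\in v^{-1}\mathbb{Z}[v^{-1}]$ for $\sigma'<\sigma$. This gives existence and uniqueness of the canonical basis purely algebraically; that it coincides with the IC-basis defined in the excerpt follows from Poincaré duality for the intersection cohomology complexes $IC_\sigma$ on $\overline{Y^\Lambda_\sigma}$ (which forces bar-invariance of $\{\sigma\}_d$) together with the degree bound $P_{\sigma',\sigma}\in v^{-1}\mathbb{Z}[v^{-1}]$ coming from the support/cosupport conditions on $IC$-sheaves, exactly as recorded before the statement. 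The compatibility of $\leq$ with the affine type $D$ Bruhat order, noted in the excerpt, is what lets us transport these facts from the affine type $A$ situation, where they are due to Lusztig \cite{Lu99}, or quote them from \cite{FLLLW20} in the affine $B/C$ setting; the extra component index (the $\mathcal{X}^{\mathfrak{d},0}$ vs.\ $\mathcal{X}^{\mathfrak{d},1}$ / parity decomposition and the role of $T_\rho$) only reshuffles the orbit set $\Sigma_d$ and does not affect the argument.

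Finally, positivity of the structure constants. Writing $\{\sigma\}_d \ast \{\sigma'\}_d = \sum_{\sigma''} m_{\sigma,\sigma'}^{\sigma''}\,\{\sigma''\}_d$, the claim is $m_{\sigma,\sigma'}^{\sigma''}\in\mathbb{N}[v,v^{-1}]$. The geometric proof is the Lusztig argument: realize the convolution product via pullback–pushforward along the correspondence $\mathcal{Y}\times\mathcal{Y}\times\mathcal{Y}\to\mathcal{Y}\times\mathcal{Y}$, observe that pushforward of the (shifted, semisimple) complex $IC_\sigma \boxtimes IC_{\sigma'}$ is, by the Decomposition Theorem \cite{BBD82}, a direct sum of shifts of $IC_{\sigma''}$'s with multiplicities given by dimensions of stalks of cohomology sheaves — hence nonnegative integer coefficients of powers of $v$ — and the parity/purity statements (all relevant cohomology concentrated in the expected degrees, no odd vanishing obstruction) that hold here because these varieties are paved by affine spaces, just as in \cite{Lu99} and \cite{FLLLW20}. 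The main obstacle I anticipate is not conceptual but bookkeeping: one must make sure the Decomposition Theorem applies uniformly over the two ${\rm SO}_F(V)$-orbit components on $\mathcal{Y}_d^{\mathfrak{d}}\times\mathcal{Y}_d^{\mathfrak{d}}$ and that the generator $T_\rho$ (which permutes the components) does not spoil purity — this is handled by checking that $[T_\rho]\ast[\sigma]=[\sigma']$ is a single closed orbit shift (as in Lemma~\ref{multiplication}(d)), so it contributes only a monomial in $v$ and preserves all positivity, after which the reduction to the already-known affine type $A$/$D$ combinatorics goes through.
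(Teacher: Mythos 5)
Your proposal is essentially the argument this theorem rests on: the paper itself states the theorem without a written proof, relying on the standard Beilinson--Lusztig--MacPherson/Lusztig IC-sheaf argument (as in \cite{Lu99} and \cite{FLLLW20}), and your sketch reproduces exactly that: unitriangularity of the transition matrix $P_{\sigma',\sigma}$ (with $P_{\sigma,\sigma}=1$, support on $\sigma'\leq\sigma$) gives the $\mathcal{A}$-basis claim, and the Decomposition Theorem of \cite{BBD82} applied to the convolution correspondence, together with purity coming from the affine-space paving of Iwahori orbits, gives positivity of the structure constants; your observation that $[T_{\rho}]$ only permutes orbits by a length-zero shift and so does not disturb purity is the right way to handle the extended (type $D$) feature.

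Two small remarks. First, the entire bar-involution/Kazhdan--Lusztig-lemma portion of your argument is not needed for the statement as given: since $\{\sigma\}_d$ is \emph{defined} geometrically via the stalks of $IC_{\sigma}$, the basis assertion follows immediately from unitriangularity, and bar-invariance would only be needed if one wanted the intrinsic algebraic characterization of the canonical basis (which the theorem does not claim). Second, within that superfluous portion your phrase that the generators $[T_{\alpha}]$ are ``suitably normalized'' to be bar-invariant is imprecise -- in a Hecke algebra $T_s$ itself is never bar-invariant; the bar-invariant elements are of Kazhdan--Lusztig type such as $v^{-1}(T_s+1)$, and the involution is defined by $\overline{T_s}=T_s^{-1}$ up to normalization -- but since that detour can be deleted without affecting the proof of the stated theorem, this does not constitute a gap.
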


\end{document}